\documentclass[11pt]{article}
\usepackage{amsthm,amsmath,amssymb}
\usepackage{graphicx,float,color,fancybox,setspace,hyperref}
\usepackage{pgf,tikz}
\usepackage[affil-it]{authblk}
\usepackage{indentfirst}
\usepackage[section]{placeins}
\usepackage{enumerate}
\usepackage{thmtools}
\usepackage{enumitem}
\usepackage{subcaption}
\usepackage{bm}
\usepackage{mathrsfs}
\usetikzlibrary{positioning, shapes.geometric, arrows.meta, calc, fit,decorations.pathreplacing}

\hypersetup{
	colorlinks,
	linkcolor={red!100!black},
	citecolor={green!60!black},
	urlcolor={blue!60!black}
}
\newtheorem{theorem}{Theorem}
\newtheorem{lemma}{Lemma}

\theoremstyle{definition}
\newtheorem{definition}{Definition}

\newcommand{\Fq}{\mathbb F_q}
\newcommand{\Gr}[2]{\genfrac{[}{]}{0pt}{}{#1}{#2}}
\newcommand{\qbinom}[2]{\genfrac{[}{]}{0pt}{}{#1}{#2}_q}
\newcommand{\qnum}[1]{[#1]_q}
\newcommand{\rank}{\operatorname{rank}}
\newcommand{\Hom}{\operatorname{Hom}}
\newcommand{\co}{\operatorname{co}_2}
\newcommand{\AG}{\operatorname{AG}}
\newcommand{\Tr}{\operatorname{Tr}}
\newcommand{\dir}{\operatorname{dir}}
\newcommand{\Span}{\operatorname{span}}
\newcommand{\aff}{\operatorname{aff}}

\begin{document}
\title{Erd\H{o}s--Ko--Rado theorems in $\ell_2$-norm for three finite spaces}
\author{Qian BAO$^1$, Yaojun CHEN$^{1,}$\footnote{Corresponding author. Email: yaojunc@nju.edu.cn}~, Yanbo ZHANG$^2$\\
{\small $^1$School of Mathematics, Nanjing University, Nanjing 210093, China}\\
{\small $^2$School of Mathematical Sciences, Hebei Normal University, Shijiazhuang 050024, China}}
\date{}
\maketitle
\begin{abstract} Let $\mathcal{F}$ be a $k$-uniform hypergraph. The famous Erd\H{o}s--Ko--Rado (1961) theorem  determines the maximum size and extremal structure for $\mathcal{F}$ being $t$-intersecting, that is, 
$|F_1 \cap F_2| \ge t$ for any two edges $F_1, F_2$ of $\mathcal F$. The codegree squared sum $\mathrm{co}_2(\mathcal{F})$ is the square of the $\ell_2$-norm of the codegree vector of all $(k-1)$-sets in $\mathcal{F}$, which was initially introduced for Tur\'an problems of hypergraphs. Recently, Brooks and Linz (2026), as well as Wu and Zhang (2026) investigated 
the maximum value of $\mathrm{co}_2(\mathcal{F})$ and corresponding extremal structures for $\mathcal{F}$ being $t$-intersecting. Moreover, Brooks and Linz asked if the classical results on intersecting families can be extended to $\mathrm{co}_2(\mathcal{F})$. In this paper, by developing the spectral techniques for incidence matrices, we study the extremal problems of $\mathrm{co}_2(\mathcal{F})$ for $\mathcal{F}$ being intersecting families in finite vector spaces, affine spaces, and attenuated spaces, and establish the Erd\H{o}s--Ko--Rado theorems in $\ell_2$-norm for the three finite spaces.

\vskip 2mm
\noindent{\bf Keywords}: Erd\H{o}s--Ko--Rado theorem, codegree squared sum, vector space, finite affine space, attenuated space
\end{abstract}

\section{Introduction}


A major goal of extremal combinatorics is to understand the largest size of discrete structures satisfying prescribed constraints. For $k$-uniform hypergraphs, this leads to Tur\'{a}n-type problems, which ask for the maximum number of edges in a hypergraph avoiding a fixed family of forbidden configurations. Despite substantial progress, these problems remain widely open, particularly for $k\geq 3$. This has motivated the study of finer structural parameters beyond the edge count, such as codegrees and the $\ell_2$-norm of the codegree vector.

Let $[n] = \{1,2,\dots,n\}$, and let $\binom{[n]}{k}$ denote the family of all $k$-element subsets of $[n]$. For a $k$-uniform hypergraph $\mathcal{F} \subseteq \binom{[n]}{k}$, the \emph{codegree} $d(E)$ of a set $E \subseteq [n]$ with $|E| = k-1$ is defined as the number of edges of $\mathcal{F}$ that contain $E$, namely
\[
d(E) = \left| \bigl\{ F \in \mathcal{F} : E \subseteq F \bigr\} \right|.
\]
The vector of all codegrees over all $(k-1)$-subsets forms a vector $\mathbf{x} = (d(E))_{E \in \binom{[n]}{k-1}}$. The \emph{codegree squared sum} of $\mathcal{F}$, denoted by $\co(\mathcal{F})$, is defined as the square of the $\ell_2$-norm of this codegree vector:
\[
\co(\mathcal{F}) = \sum_{E \in \binom{[n]}{k-1}} d(E)^2.
\]
Since $\sum_{E \in \binom{[n]}{k-1}} d(E) = k |\mathcal{F}|$, the $\ell_1$-norm of the codegree vector is proportional to the classical edge count, while the $\ell_2$-norm additionally captures the concentration of codegrees, thereby revealing more refined structural properties of the hypergraph.

A fundamental class of $k$-uniform hypergraphs is given by \emph{$t$-intersecting} families, in which any two edges meet in at least $t$ vertices. The classical Erd\H{o}s--Ko--Rado \cite{EKR1961} theorem  determines the maximum size of a $t$-intersecting family for sufficiently large $n$, and corresponding extremal structure.
 
\begin{theorem}(Erd\H{o}s-Ko-Rado \cite{EKR1961})
Let $t,k,n$ be positive integers such that $t \leq k \leq n$. If $\mathcal{F}\subseteq \binom{[n]}{k}$ is $t$-intersecting, then there is a function $n_{0}(k,t)$ such that 
$$|\mathcal{F}|\leq\binom{n-t}{k-t}$$ for $n\geq n_{0}(k,t)$. Equality holds only if $\mathcal{F}=\{F\in\binom{[n]}{k}: \ X\subseteq F\}$ for some $t$-subset $X$.
\end{theorem}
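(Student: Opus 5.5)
We follow the classical kernel/covering argument of Erd\H{o}s, Ko and Rado, organized around the $\tau$-cover structure of $\mathcal{F}$. Let $\mathcal{F}\subseteq\binom{[n]}{k}$ be $t$-intersecting with $|\mathcal{F}|\ge\binom{n-t}{k-t}$, and put $T=\bigcap_{F\in\mathcal{F}}F$. If $|T|\ge t$, then $\mathcal{F}$ is contained in the star of some $t$-set $X\subseteq T$. Hence $|\mathcal{F}|\le\binom{n-t}{k-t}$, and equality forces $\mathcal{F}$ to be the full star. So it suffices to show that $|T|<t$ forces $|\mathcal{F}|<\binom{n-t}{k-t}$ once $n$ is large.

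Suppose $|T|<t$. The main step is a branching (kernel) bound. Fix $F_0\in\mathcal{F}$. Every $F\in\mathcal{F}$ meets $F_0$ in at least $t$ elements, so $F$ contains one of the $\binom{k}{t}$ $t$-subsets of $F_0$. For a $t$-set $X\subseteq F_0$, we do not simply bound the sets containing $X$ by $\binom{n-t}{k-t}$. Instead we use $|T|<t$: since $X\not\subseteq T$, some $F_X\in\mathcal{F}$ does not contain $X$. Every $F\supseteq X$ must meet $F_X$ in at least $t$ points, and $X\not\subseteq F_X$. So $F$ contains $X$ together with at least one element $y\in F_X\setminus X$; if $y\in X$ were the only option, then $|F\cap F_X|\ge t$ could not come entirely from $X$. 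Each such $F$ therefore contains a set $X\cup\{y\}$ of size $t+1$ with $y\in F_X\setminus X$. This yields
\[
|\{F\in\mathcal{F}:X\subseteq F\}|\le k\binom{n-t-1}{k-t-1}.
\]
Summing over the $t$-subsets $X$ of $F_0$ gives
\[
|\mathcal{F}|\le \binom{k}{t}\,k\binom{n-t-1}{k-t-1}=\binom{k}{t}\,k\,\frac{k-t}{n-t}\binom{n-t}{k-t}.
\]
This is strictly less than $\binom{n-t}{k-t}$ as soon as $n-t>k(k-t)\binom{k}{t}$. Taking $n_0(k,t)=t+k(k-t)\binom{k}{t}+1$ gives both the bound and the uniqueness of the extremal family. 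The degenerate case $k=t$ is trivial: a $t$-intersecting family of $t$-sets has exactly one member, and that family is the star.

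The step needing care is the claim that each $F\supseteq X$ must pick up an extra element of $F_X$ outside $X$. Precisely, we need $|F\cap F_X|\ge t$ with $|X\cap F_X|\le t-1$. This forces $F$ to contain some element of $F_X\setminus X$, giving at most $|F_X\setminus X|\le k$ choices of $y$, with the remaining $k-t-1$ elements of $F$ free. Beyond this, the argument only requires tracking the constants so that the strict inequality, and hence uniqueness, holds.
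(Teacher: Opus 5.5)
The paper gives no proof of this statement. It quotes the theorem from Erd\H{o}s, Ko and Rado as background, so there is no in-paper argument to compare with. Judged on its own, your kernel argument is a correct and self-contained proof.

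The case $|T|\ge t$ places $\mathcal F$ inside a $t$-star, and equality forces the full star. In the case $|T|<t$, no $t$-set $X\subseteq F_0$ can lie in $T$, simply because $|X|>|T|$, so a set $F_X$ with $X\not\subseteq F_X$ exists. Since $|X\cap F_X|\le t-1<t\le|F\cap F_X|$, every $F\supseteq X$ contains some $y\in F_X\setminus X$. The union bound over $y$ and over the $t$-subsets of $F_0$ then gives
\[
|\mathcal F|\le k\binom{k}{t}\frac{k-t}{n-t}\binom{n-t}{k-t},
\]
which is strictly below $\binom{n-t}{k-t}$ once $n\ge t+1+k(k-t)\binom{k}{t}$. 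Uniqueness follows from this strictness, and the degenerate case $k=t$ is handled correctly.

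The only blemish is expository. The sentence beginning ``if $y\in X$ were the only option'' is garbled, but your final paragraph states the needed inequality precisely.

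As for what the approach buys, you get an explicit polynomial threshold with uniqueness for free, which is all the statement asks for. It does not reach the sharp range $n\ge (t+1)(k-t+1)$ of Frankl and Wilson, which is the range in the Wu--Zhang theorem quoted in the paper. That range needs finer tools, such as Wilson's eigenvalue method. Since the statement only asks for some $n_0(k,t)$, this is not a gap.
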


This result provides a natural direction for studying the extremal problem in $\ell_2$-norm  of $t$-intersecting families. Recently, Balogh, Clemen, and Lidick\'{y} \cite{Balogh2022} introduced an $\ell_2$-extremal framework for hypergraphs: given a forbidden family $\mathcal{H}$ of $k$-uniform hypergraphs, one maximizes $\co(\mathcal{F})$ over all $\mathcal{H}$-free $k$-uniform hypergraphs $\mathcal{F}$. This framework is closely related to Bey's general upper bound on the sum of squared codegrees \cite{Bey2003}, and has motivated further results, including the first Erd\H{o}s--Ko--Rado type theorem in $\ell_2$-norm for $t=1$, due to Brooks and Linz \cite{Brooks2023}. In the same paper, they further established analogues of the Erd\H{o}s matching conjecture and the  Erd\H{o}s--Ko--Rado theorem in $\ell_2$-norm for $t$-intersecting families, valid for sufficiently large $n$.

In the $\ell_2$-norm setting, Brooks and Linz \cite{Brooks2023} raised a natural question: can the  classical results on intersecting families, including both the EKR and FHM theorems, be extended to the codegree squared sum? For the EKR case, this was fully resolved by Wu and Zhang \cite{WuZhang2026}, who obtained the following.

\begin{theorem}(Wu-Zhang \cite{WuZhang2026})
Let $t,k,n$ be positive integers such that $t \leq k \leq n$. If $\mathcal{F}\subseteq\binom{[n]}{k}$ is $t$-intersecting, then 
for $n\geq(t+1)(k-t+1)$, we have
\[
\co(\mathcal{F})\leq\binom{n-t}{k-t}(t+(n-k+1)(k-t))    
\]
equality holds if and only if $\mathcal{F}=\left\{F\in\binom{[n]}{k}: \ X\subseteq F\right\}$ for some $t$-subset $X$ of $[n]$. Moreover, when $n=2k$ and $t=1$, then the family $\binom{Z}{k}$ is also extremal, where $Z$ is a $(n-1)$-subset of $[n]$.
\end{theorem}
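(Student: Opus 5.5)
We reduce the codegree squared sum to two quantities: the size of $\mathcal F$ and the number of pairs of edges of $\mathcal F$ meeting in exactly $k-1$ points. Let $N$ be the $\binom{n}{k-1}\times\binom{n}{k}$ inclusion matrix and $\mathbf{1}_{\mathcal F}$ the characteristic vector of $\mathcal F$. Then
\[
\co(\mathcal F)=\|N\mathbf{1}_{\mathcal F}\|^2=\mathbf{1}_{\mathcal F}^{T}N^{T}N\mathbf{1}_{\mathcal F}=k|\mathcal F|+2P(\mathcal F),
\]
where $P(\mathcal F)$ is the number of unordered pairs $F_1,F_2\in\mathcal F$ with $|F_1\cap F_2|=k-1$. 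The matrix $N^TN=kI+A_J$ involves the adjacency matrix $A_J$ of the Johnson graph $J(n,k)$. The eigenvalues of $A_J$ are $\theta_j=(k-j)(n-k-j)-j$, with eigenspaces $V_j$ for $j=0,\dots,k$.

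For the star $\mathcal S_X=\{F: X\subseteq F\}$ with $|X|=t$, we compute directly. A $(k-1)$-set containing $X$ has codegree $n-k+1$, and there are $\binom{n-t}{k-1-t}$ such sets. A $(k-1)$-set missing exactly one element of $X$ has codegree $1$, and there are $t\binom{n-t}{k-t}$ such sets. This gives
\[
\co(\mathcal S_X)=\binom{n-t}{k-t}\bigl(t+(n-k+1)(k-t)\bigr),
\]
so we must show $\co(\mathcal F)\le \co(\mathcal S_X)$.

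The main step is a Hoffman/ratio-type bound for $t$-intersecting families, in the style of Wilson's proof of EKR for $n\ge (t+1)(k-t+1)$. Wilson constructs a matrix $B$ in the Bose--Mesner algebra of $J(n,k)$ with $B_{F_1F_2}=0$ whenever $|F_1\cap F_2|\ge t$, together with eigenvalue conditions. From $\mathbf{1}_{\mathcal F}^TB\mathbf{1}_{\mathcal F}=0$ one derives
\[
\sum_{j\ge 1}\|E_j\mathbf{1}_{\mathcal F}\|^2 w_j=\cdots,
\]
where $E_j$ is the projection onto $V_j$. The quantity to bound is
\[
\co(\mathcal F)=\sum_j (k+\theta_j)\|E_j\mathbf{1}_{\mathcal F}\|^2 .
\]
Here $\|E_0\mathbf{1}_{\mathcal F}\|^2=|\mathcal F|^2/\binom nk$ and $\sum_j\|E_j\mathbf{1}_{\mathcal F}\|^2=|\mathcal F|$. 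The weights $k+\theta_j$ decrease in $j$, so the goal is to show the weight concentrates in $V_0\oplus\cdots\oplus V_t$. For a star, $\mathbf{1}_{\mathcal S_X}$ lies exactly in $V_0\oplus\cdots\oplus V_t$.

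So we need an inequality of the form
\[
\co(\mathcal F)\le \alpha|\mathcal F|+\beta\frac{|\mathcal F|^2}{\binom nk}+\gamma\, Q(\mathcal F),
\]
where $Q$ is controlled by Wilson's $t$-intersecting constraint and the inequality is tight for stars. Combined with EKR ($|\mathcal F|\le\binom{n-t}{k-t}$), the right-hand side is then maximized at stars, provided the coefficients have the right signs for $n\ge (t+1)(k-t+1)$.

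The main obstacle is this step. Wilson's certificate only controls one linear combination of the $\|E_j\mathbf{1}\|^2$, while we need to bound a different linear combination, namely the $\co$-weights $k+\theta_j$. The first thing to try is a linear programming duality argument. We look for a pseudo-adjacency matrix $M=N^TN-\lambda I-\mu J+B'$, where $B'$ vanishes on pairs with $|F_1\cap F_2|\ge t$ (so it can be dropped when evaluated on $\mathcal F$). We require $M$ to be negative semidefinite on the relevant eigenspaces, with equality on $V_1,\dots,V_t$. Choosing $B'$ as a combination of Wilson's matrices should make this feasible exactly when $n\ge(t+1)(k-t+1)$.

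For the equality case, tightness forces $\mathbf{1}_{\mathcal F}\in V_0\oplus\cdots\oplus V_t$ and $|\mathcal F|=\binom{n-t}{k-t}$, so $\mathcal F$ is a star. The exception is the boundary case $n=2k$, $t=1$: there a multiplicity-$0$ coefficient degenerates and the Hilton--Milner-type family $\binom{Z}{k}$ also attains the bound, which we verify directly.
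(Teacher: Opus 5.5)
The paper only quotes this theorem from Wu--Zhang and gives no proof of it, so there is nothing to match. Judged on its own, your proposal has a genuine gap, exactly at the step you call the main obstacle.

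Your set-up is correct. You have $\co(\mathcal F)=k|\mathcal F|+2P(\mathcal F)$, the matrix $N^TN=kI+A_J$ has eigenvalue $(k-j)(n-k-j+1)$ on $V_j$, and your value of $\co(\mathcal S_X)$ is right. For $t=1$ this already suffices, exactly as in the paper's proof of Theorem~\ref{thm:vector-l2-ekr}. Lemma~\ref{lem:general-spectral} with $\lambda_1=(k-1)(n-k)$ and $\lambda_0-\lambda_1=n$, together with EKR and monotonicity, gives $\binom{n-1}{k-1}\bigl(1+(k-1)(n-k+1)\bigr)$.

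For $t\ge2$ that route overshoots. For $(n,k,t)=(10,3,2)$ it gives $352/3$, while the star has $80$. The reason is that the star carries mass on $V_2,\dots,V_t$, where the eigenvalues are smaller. So the theorem rests entirely on the certificate you only describe. You need scalars $\lambda,\mu$ and a matrix $B'$ in the span of the distance-$i$ matrices with $i>k-t$ such that:
\begin{itemize}
\item $N^TN-\lambda I-\mu J+B'\preceq 0$;
\item this matrix vanishes on $V_0,\dots,V_t$, which is forced if the star is to be tight;
\item $\lambda x+\mu x^2$ is increasing on $[0,\binom{n-t}{k-t}]$.
\end{itemize}
After imposing the equalities you are left with a one-parameter family. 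It must satisfy $k-t$ inequalities on $V_{t+1},\dots,V_k$ (Eberlein-polynomial computations) for every $n\ge(t+1)(k-t+1)$, plus the sign condition. You neither construct such a family nor verify any of this, and ``should make this feasible'' is not an argument. Wilson's matrix is tuned to a different objective, as you yourself note.

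Your equality argument also breaks at the threshold. The step ``$|\mathcal F|=\binom{n-t}{k-t}$, hence a star'' uses EKR uniqueness. That uniqueness fails at $n=(t+1)(k-t+1)$ for $t\ge2$, where $\{F:|F\cap[t+2]|\ge t+1\}$ has the same size.

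When $k=t+1$ and $n=2k$, this family is $\binom{[k+1]}{k}$. It is a clique in $J(n,k)$ of the same size as the star, so it has the same value $km+m(m-1)$ of the codegree squared sum. For example, at $(n,k,t)=(6,3,2)$ both $\binom{[4]}{3}$ and a star give $24$. Yet $\binom{[4]}{3}$ is not covered by the $t=1$ exception, so the equality clause as stated is actually false there and cannot be proved. Your remark about a ``degenerate coefficient'' does not address this.

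Even for $t=1$, $n=2k$, checking that $\binom{Z}{k}$ attains the bound is not enough, because EKR has many other extremal families at $n=2k$. You need two further steps. First, show that equality forces $\mathbf 1_{\mathcal F}\in V_0\oplus V_1$. Second, use that the only Boolean functions in that space, for $2\le k\le n-2$, are constants, stars and co-stars.
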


The Erd\H{o}s-Ko-Rado (EKR) theorem is a foundational result in extremal set theory. Its classical form has been extensively studied, and its generalizations have substantially shaped several areas of combinatorics. The introduction of the $\ell_2$-norm version of the EKR theorem naturally raises the question of whether analogous $\ell_2$-norm EKR-type extremal results can be established for other combinatorial structures. This paper addresses this question by studying the $\ell_2$-norm EKR extremal properties of three  finite spaces: vector spaces, affine spaces, and attenuated spaces. 

\subsection{Intersecting families and EKR theorems in the three  spaces}
In this section, we give the definitions of the intersecting families $\mathcal{F}$ and $\mathrm{co}_2(\mathcal{F})$ in the three spaces, respectively. Moreover, we also present the  EKR theorems in these three spaces.

\vskip 2mm
Recall that for any positive integers $a\geq b$, the Gaussian binomial coefficient is defined by
\[
 \qbinom{a}{b}=\prod\limits_{i=0}^{b-1}\frac{q^{a-i}-1}{q^{b-i}-1}.
\]
In addition, we set $\qbinom{a}{b}=0$ if $a<b$ and $\qnum{a}=\qbinom{a}{1}$.
The Gaussian binomial coefficients satisfy
\begin{equation}\label{eq:qbinom-ratio}
 \qbinom{N}{r-1}=\qbinom{N}{r}\frac{\qnum r}{\qnum{N-r+1}},
\end{equation}
and the $q$-Pascal identity
\begin{equation}\label{eq:q-pascal}
 \qbinom{N}{r}=\qbinom{N-1}{r-1}+q^r\qbinom{N-1}{r},
\end{equation}
and
\begin{equation}\label{eq:qinteger-sum}
[a+b]_q=[a]_q+q^a[b]_q.
\end{equation}

\vskip 2mm
\noindent {\bf 1. Vector spaces over a finite field}
\vskip 2mm

Let $q$ be a prime power and $V$ be an $n$-dimensional vector space over $\Fq$. For $0\le k\le n$, we use $\Gr{V}{k}$ to denote the family of all $k$-dimensional subspaces of $V$. Then it is well known that $\qbinom{n}{r}=|\Gr{V}{r}|$. In what follows we will abbreviate ``$k$-dimensional subspace" to ``$k$-subspace".  
A family $\mathcal F\subseteq \Gr{V}{k}$ is \emph{intersecting} if
$\dim(F\cap G)\ge 1$ for all $F,G\in\mathcal F$. For $E\in\Gr{V}{k-1}$, define its codegree and codegree squared sum as follows:
\[
d_{\mathcal F}(E)=|\{F\in\mathcal F:E\subseteq F\}|,
\qquad
\co(\mathcal F)=\sum_{E\in\Gr{V}{k-1}}d_{\mathcal F}(E)^2.
\]

Using combinatorial methods, Hsieh \cite{Hsieh1975} proved the EKR theorem for finite vector spaces as follows.

\begin{theorem}(Hsieh \cite{Hsieh1975})\label{thm:Hsieh}
If $n>2k$ and $\mathcal F\subseteq \Gr{V}{k}$ is intersecting, then
\[
 |\mathcal F|\le \qbinom{n-1}{k-1}.
\]
Equality holds if and only if  $\mathcal F=\{F\in \Gr{V}{k}:\ X\subseteq F\}$ for some 1-subspace $X\subseteq V$.
\end{theorem}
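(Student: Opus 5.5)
The plan is a Katona-type cycle argument adapted to $\Fq^n$, followed by a double count. Call a sequence of $n$ $1$-subspaces $\langle v_1\rangle,\dots,\langle v_n\rangle$ a \emph{cyclic frame} if $v_1,\dots,v_n$ is a basis of $V$. For each $i$, let $B_i=\langle v_i,v_{i+1},\dots,v_{i+k-1}\rangle$, with indices taken mod $n$. These are $n$ distinct $k$-subspaces, and we call them the blocks of the frame. Following Katona, we first show that an intersecting family contains at most $k$ blocks of any fixed frame when $n>2k$. Suppose $B_i\in\mathcal F$. Any other block that meets $B_i$ nontrivially is a span of consecutive basis vectors, so it must share a basis vector with $B_i$. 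The argument is then Katona's: pair the blocks $B_{i-j}$ and $B_{i+k-j}$ for $j=1,\dots,k-1$. These two blocks are spans of disjoint sets of basis vectors (this uses $n\ge 2k$), so they intersect trivially, and at most one block from each pair lies in $\mathcal F$. Together with $B_i$ itself this gives at most $k$ blocks.

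Next we double count pairs $(\text{frame},\,F)$ with $F\in\mathcal F$ a block of the frame. By transitivity of $GL(V)$, every $k$-subspace is a block of the same number $c$ of frames. Counting incidences between all frames and all $k$-subspaces gives $N\cdot n = c\qbinom{n}{k}$, where $N$ is the number of frames. Counting pairs with $F\in\mathcal F$ gives $|\mathcal F|\,c\le kN$. Hence
\[
|\mathcal F|\le \frac{k}{n}\qbinom{n}{k}.
\]
This bound is weaker than $\qbinom{n-1}{k-1}=\frac{\qnum k}{\qnum n}\qbinom nk$, so the naive frame is not sufficient, and fixing this is the main obstacle. The standard remedy would be to replace the frame by a Singer cycle: identify $V$ with $\mathbb F_{q^n}$, let $\sigma$ be multiplication by a primitive element, and use the orbit of a suitable $k$-subspace under $\langle\sigma\rangle$, which has $\qnum n$ points. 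One would then want the analogue of Katona's lemma, namely that at most $\qnum k$ of the $\qnum n$ translates in an orbit pairwise intersect. I do not expect this to be easy; it is essentially a hard problem.

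Given that difficulty, I would instead follow Hsieh's combinatorial route or the Frankl--Wilson eigenvalue (Hoffman) bound. The $q$-Kneser graph on $\Gr Vk$ is regular of degree $q^{k^2}\qbinom{n-k}{k}$, and its least eigenvalue is $-q^{k(k-1)}\qbinom{n-k-1}{k-1}$. An intersecting family is an independent set in this graph, so the ratio bound gives
\[
|\mathcal F|\le \frac{\qbinom nk}{1+\frac{q^{k^2}\qbinom{n-k}{k}}{q^{k(k-1)}\qbinom{n-k-1}{k-1}}}=\qbinom{n-1}{k-1},
\]
and the last equality is a routine check using \eqref{eq:qbinom-ratio}. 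For the equality case, equality in the ratio bound forces the characteristic vector of $\mathcal F$, shifted by a constant, to lie in the eigenspace of the least eigenvalue. For $n>2k$ this eigenspace is the image of the incidence map from $1$-subspaces to $k$-subspaces. A $0/1$ vector of the right size in this span must be the characteristic vector of a star $\{F: X\subseteq F\}$ with $\dim X=1$, and this can be shown by a rank argument on the $1$-subspace versus $k$-subspace incidence matrix. The eigenvalue computation uses the known eigenvalues of the Grassmann scheme, which I would quote. The step I expect to be the delicate one is this final characterization of equality.
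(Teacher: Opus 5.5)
The paper gives no proof of this statement. It quotes the result from Hsieh and uses it as a black box: both the bound and the equality case feed into the monotonicity of $\mu_1$ in the proof of Theorem~\ref{thm:vector-l2-ekr}. Judged on its own, the inequality part of your proposal is correct. You rightly discard the frame argument. The ratio bound with $\theta_0=q^{k^2}\qbinom{n-k}{k}$ and $\theta_1=-q^{k(k-1)}\qbinom{n-k-1}{k-1}$ gives $\qbinom{n}{k}[k]_q/[n]_q=\qbinom{n-1}{k-1}$, because $\theta_0/|\theta_1|=q^k[n-k]_q/[k]_q$ and $[k]_q+q^k[n-k]_q=[n]_q$ by \eqref{eq:qinteger-sum}. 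This is the standard spectral proof of the bound, in the same spirit as the paper's Lemma~\ref{lem:vector-bey}.

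The gap is the equality case. The only place you use $n>2k$ is to identify the $\theta_1$-eigenspace with the image of the point--$k$-space incidence map (intersected with $\mathbf 1^\perp$). That identification holds equally at $n=2k$. There, for $k\ge 2$ and a hyperplane $H$, the family $\mathcal D_H=\{F\in\Gr{V}{k}:F\subseteq H\}$ is intersecting and has size $\qbinom{2k-1}{k}=\qbinom{2k-1}{k-1}$. Its characteristic vector lies in that image, since
\[
\mathbf 1_{\mathcal D_H}(F)=\frac{1}{[k]_q}\bigl|\{x\in\Gr{V}{1}:x\subseteq F\}\bigr|-\frac{1}{q^{k-1}}\bigl|\{x\in\Gr{V}{1}:x\subseteq F,\ x\not\subseteq H\}\bigr|,
\]
yet $\mathcal D_H$ is not a star. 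So your claim that a $0/1$ vector of the right size in this span must be a star is false at $n=2k$. Any proof of it must use $n\ge 2k+1$ in an essential way, and ``a rank argument on the incidence matrix'' gives no hint of how.

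Moreover, the claim is not an intermediate lemma but a reformulation of the uniqueness statement itself. Suppose $\mathbf 1_{\mathcal F}$ lies in that image and $|\mathcal F|=\qbinom{n-1}{k-1}$. Then $\mathbf 1_{\mathcal F}-\frac{|\mathcal F|}{\qbinom{n}{k}}\mathbf 1$ is a $\theta_1$-eigenvector, so every $F\in\mathcal F$ meets exactly $\frac{|\mathcal F|}{\qbinom{n}{k}}(\theta_0-\theta_1)+\theta_1=0$ members of $\mathcal F$ trivially. In other words, every such vector is automatically a maximum intersecting family. What is missing is an actual classification argument, for instance Hsieh's original combinatorial one. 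As written, your proposal proves $|\mathcal F|\le\qbinom{n-1}{k-1}$ but not the ``only if'' direction of the equality statement.
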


\noindent {\bf 2. Finite affine spaces}
\vskip 2mm

Let $V$ be an $n$-dimensional vector space over $\Fq$ and let $\AG(n,q)$ be the affine space with translation space $V$.  A $k$-flat is a coset
\[
P=a+U,
\quad a\in V,
\quad U\subseteq V,
\quad \dim (U)=k.
\]
Its direction is $\dir(P)=U$. Let $P=a+U_1$ and $Q=b+U_2$ be two flats in $\AG(n,q)$, with direction subspaces $U_1, U_2\subseteq V$, respectively. We say that $P$ and $Q$ are \emph{parallel}, denoted by $P\parallel Q$ if $U_1=U_2$. Moreover, $P\subseteq Q$ if and only if $U_1\subseteq U_2$ and $a-b\in U_2$.
 
Let $\mathcal M_k$ be the set of all $k$-flats. Then $|\mathcal M_k|=q^{n-k}\qbinom{n}{k}$,
because there are $\qbinom{n}{k}$ choices for $U$ and, for fixed $U$, there are $q^{n-k}$ cosets of $U$ in $V$. A family $\mathcal{F}\subseteq {\mathcal M_k}$ is \emph{intersecting} if $F_1 \cap F_2 \neq \emptyset$ for any $F_1,F_2\in {\mathcal F}$. Note that the intersection $F_1 \cap F_2$ is also a flat, and its dimension satisfies $\dim(F_1 \cap F_2) \ge 0$. If $F_1 \cap F_2 = \emptyset$, then we define $\dim(F_1 \cap F_2)= -1$. For $E\in\mathcal M_{k-1}$, define
\[
d_{\mathcal F}(E)=|\{P\in\mathcal F:E\subseteq P\}|,
\qquad
\operatorname{co}^{\mathrm{AG}}_2(\mathcal F)=\sum_{E\in\mathcal M_{k-1}}d_{\mathcal F}(E)^2.
\]

Guo and Xu \cite{GuoXu2017} generalized Hsieh’s result and obtained
the EKR theorem for finite affine spaces.

\begin{theorem}(Guo-Xu \cite{GuoXu2017})\label{thm:guo-xu}
Assume $n\ge 2k+1$.  If $\mathcal F\subseteq\mathcal M_k$ is an intersecting family of $k$-flats in $\AG(n,q)$, then
\[
|\mathcal F|\le \qbinom{n}{k}.
\]
Equality holds if and only if $\mathcal{F}$ consists of all $k$-flats containing a fixed $0$-flat in $\AG(n,q)$.
\end{theorem}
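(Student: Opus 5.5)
The inequality follows from a parallel-class argument, not from the spectral machinery. Two distinct parallel $k$-flats $a+U$ and $b+U$ are different cosets of $U$, so they are disjoint. Hence an intersecting family $\mathcal F\subseteq\mathcal M_k$ contains at most one flat of each direction. Sorting $\mathcal F$ by $\dir(P)\in\Gr{V}{k}$ then gives
\[
|\mathcal F|\le \Big|\Gr{V}{k}\Big|=\qbinom{n}{k}.
\]
The family of all $k$-flats through a fixed point $x$ has exactly one flat $x+U$ for each $U$, so it attains the bound. This disposes of everything except the uniqueness of the extremal family.

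For equality, $\mathcal F$ contains exactly one flat $F_U=c_U+U$ for every $U\in\Gr{V}{k}$. The intersecting condition reads $c_U-c_W\in U+W$ for all $U,W$. I will show there is a point $x$ with $x\in F_U$ for all $U$, i.e. $c_U-x\in U$. The plan has three steps.

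\textbf{Step 1 (anchor point).} Fix $U_0$ and translate so that $c_{U_0}=0$, i.e. $F_{U_0}=U_0$. Since $n\ge 2k+1$, there are many $W$ with $U_0\cap W=0$. For such $W$, the flats $U_0$ and $F_W$ meet in exactly one point $p_W\in U_0$. The goal is to show that $p_W$ does not depend on $W$; that common point is the candidate $x$.

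\textbf{Step 2 (comparing two complements).} Take $W,W'$ both complementary to $U_0$. When $\dim(U_0+W+W')$ is small, chain them through a third direction $W''$ with $W''\cap(U_0+W)=0$ and $W''\cap(U_0+W')=0$. Then $F_{W''}$ must meet $U_0$, $F_W$ and $F_{W'}$. The dimension count available in an ambient space of dimension $\ge 2k+1$ should force the intersection points with $U_0$ to coincide. The cleanest version of this step is to argue first for directions $W$ containing a fixed $(k-1)$-subspace. Projecting to the quotient $V/E$ turns the flats containing translates of $E$ into lines, i.e. the case $k=1$, which is easy: pairwise intersecting lines, one in each direction, in dimension $\ge 3$ all pass through a common point. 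One then propagates along the connected ``share a $(k-1)$-subspace'' graph on $\Gr{V}{k}$.

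\textbf{Step 3 (directions meeting $U_0$).} Once all $F_W$ with $W\cap U_0=0$ pass through $x$, handle directions $U$ that meet $U_0$ nontrivially. Such an $F_U$ must meet every $x+W$, for all $W$ complementary to $U_0$. If $x\notin F_U$, choose $W$ with $W\cap U_0=0$ and $(x-c_U)\notin U+W$; this is possible because $\dim(U+W)\le 2k<n$. That contradicts intersection.

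I expect Step 2 to be the main obstacle. The bound $n\ge 2k+1$ leaves very little room for generic choices of an auxiliary direction avoiding two $(2k)$-dimensional sums. This is why I would route it through the quotient-by-$E$ reduction to the line case rather than a direct genericity argument.
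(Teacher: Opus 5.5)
Your counting bound is correct and self-contained. Distinct parallel $k$-flats are disjoint cosets, so an intersecting family has at most one flat per direction. Hence $|\mathcal F|\le\qbinom{n}{k}$ (for every $n$), and stars attain it. The paper gives no proof of this theorem; it is quoted from Guo--Xu. So everything rests on your ``only if'' argument for the equality case, and that argument has a genuine gap at Step 2, the step you yourself flag.

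\textbf{The gap in Step 2.} Your first version cannot work. Since $\dim(U_0+W)=2k$, a $k$-space $W''$ with $W''\cap(U_0+W)=0$ needs $n\ge 3k$, which fails for $n=2k+1$, $k\ge 2$.

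The quotient reduction is sound, but it only yields a \emph{local} fact. For each $E\in\Gr{V}{k-1}$, the images of the $F_U$ with $U\supseteq E$ are $[n-k+1]_q>q+1$ pairwise meeting lines of distinct directions in $V/E$, hence concurrent. So some translate $G_E=x_E+E$ lies in every such $F_U$.

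``Propagate along the connected graph'' does not say what is preserved along an edge.
\begin{itemize}
\item If the invariant is $p_W$, then for complements $W,W'$ of $U_0$ with $W\cap W'=E$ one has $p_W=p_{W'}$ if and only if $G_E\cap U_0\neq\emptyset$. The local fact does not give this; it needs its own argument. (If $\bar x_E\notin (U_0+E)/E$, every line through $\bar x_E$ meeting this $k$-space has direction in a $(k+1)$-space, yet all directions of the $(n-k+1)$-dimensional $V/E$ occur.)
\item $p_W$ is defined only on $\{W:W\cap U_0=0\}$. So you need connectivity of that induced subgraph, not of the whole Grassmann graph.
\end{itemize}

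Step 3 has a smaller gap of the same kind. The reason ``$\dim(U+W)\le 2k<n$'' does not show that $W\cap U_0=0$ and $x-c_U\notin U+W$ can hold simultaneously; that needs an explicit construction.

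\textbf{A repair.} The simplest fix drops the anchor and inducts on $k$.
\begin{itemize}
\item For $U\supseteq E$ and $U'\supseteq E'$ you have $x_E-c_U\in U$, $c_U-c_{U'}\in U+U'$ and $c_{U'}-x_{E'}\in U'$, so $x_E-x_{E'}\in U+U'$.
\item Choose $U=E+\langle u\rangle$ and $U'=E'+\langle u\rangle$ for two vectors $u$ independent modulo $E+E'$. This is possible since $\dim(E+E')\le 2k-2\le n-3$. It gives $x_E-x_{E'}\in E+E'$, i.e.\ $G_E\cap G_{E'}\neq\emptyset$.
\item So $\{G_E\}$ is an intersecting family of $(k-1)$-flats in $\AG(n,q)$ with exactly one flat per direction, and $n\ge 2(k-1)+1$. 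By induction all $G_E$ contain a common point $x$. The base case $k=1$ is your line argument in $V$ itself, using $n\ge 3$.
\item Then $x\in G_E\subseteq F_U$ for any $E\subseteq U$, so $\mathcal F$ is the star at $x$.
\end{itemize}
With this route, Steps 1 and 3 become unnecessary.
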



\noindent {\bf 3. Attenuated spaces}
\vskip 2mm

Let $V$ be an $(n+\ell)$-dimensional vector space over $\Fq$, and fix an $\ell$-subspace $W\subseteq V$. A \emph{maximal attenuated subspace} is an $n$-subspace $P$ such that $P\cap W=\{\bm{0}\}$. Denote the set of all such subspaces by $\mathcal M_n=\{P\subseteq V:\dim (P)=n,\ P\cap W=\{\bm{0}\}\}$. A family $\mathcal F\subseteq\mathcal M_n$ is \emph{intersecting} if
$\dim(P\cap Q)\ge1$ for any $P,Q\in\mathcal F$.

Let $N$ be a complement of $W$, so $V=N\oplus W$ and $\dim (N)=n$.  For each $P\in\mathcal M_n$, there exists a unique linear map $A:N\to W$ such that $P=P_A=\{x+A(x):x\in N\}$, whose justification is deferred to Lemma \ref{lem:attenuated-bey}. Thus $\mathcal M_n$ is naturally identified with the additive group $G=\Hom(N,W)$ and $|G|=q^{\ell n}$.

Let $\mathcal M_{n-1}=\{E\subseteq V:\dim (E)=n-1,\ E\cap W=\{\bm{0}\}\}$. For $E\in\mathcal M_{n-1}$, define
\[
d_{\mathcal F}(E)=|\{P\in\mathcal F:E\subseteq P\}|,
\qquad
\operatorname{co}^{\mathrm{AT}}_2(\mathcal F)=\sum_{E\in\mathcal M_{n-1}}d_{\mathcal F}(E)^2.
\]
Let $H\subseteq N$ be a given $(n-1)$-subspace (called a hyperplane hereafter). Then every $E\in\mathcal M_{n-1}$ has a unique form $E_{H,a}=\{x+a(x):x\in H\}$, where $a:H\to W$ is linear. 
The subspace $P_A$ contains $E_{H,a}$ if and only if $A|_H=a$. Since $N/H$ is one-dimensional, there are exactly $|\Hom(N/H,W)|=q^\ell$ extensions $A:N\to W$ of $a$.

The following EKR theorem in attenuated space is due to Huang \cite{Huang1987}, which is presented in different form in the original paper.


\begin{theorem}(Huang \cite{Huang1987})\label{thm:Huang}
Assume $n\ge2$ and $\ell\ge n$.  If $\mathcal F\subseteq\mathcal M_n$ is intersecting, then
\[
|\mathcal F|\le q^{\ell(n-1)}.
\]
If $\ell>n$, equality holds exactly for the families consisting of all members of $\mathcal M_n$ that contain a fixed $1$-subspace $L\subseteq V$ with $L\cap W=\{\bm{0}\}$. If $\ell=n$, besides the above family, equality also holds exactly for the dual families
\[
\mathcal D_U=\{P\in\mathcal M_n:P\subseteq U\},
\]
where $U\subseteq V$, $\dim (U)=2n-1$ and $\dim(U\cap W)=n-1$.
\end{theorem}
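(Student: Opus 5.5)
The plan is to translate the problem into the bilinear forms setting. Then the bound follows from a code--anticode argument, and the equality case is handled separately.

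\textbf{Reformulation.} Via the identification $\mathcal M_n\leftrightarrow G=\Hom(N,W)$, $P\mapsto A$ with $P=P_A$, I first check when two members meet. A vector $x+A(x)$ lies in $P_B$ if and only if $A(x)=B(x)$, because the decomposition $V=N\oplus W$ is direct. Hence $P_A\cap P_B=\{x+A(x):x\in\ker(A-B)\}$ and
\[
\dim(P_A\cap P_B)=\dim\ker(A-B)=n-\rank(A-B).
\]
So $\mathcal F$ is intersecting exactly when the corresponding set $\mathcal A\subseteq G$ satisfies $\rank(A-B)\le n-1$ for all $A,B\in\mathcal A$. In other words, $\mathcal A$ is an anticode of rank-diameter $n-1$ in the group of $n\times\ell$ matrices.

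\textbf{Upper bound.} Since $\ell\ge n$, there is a maximum rank distance (Gabidulin) code $\mathcal C\subseteq G$ with $|\mathcal C|=q^{\ell}$ in which every nonzero difference has full rank $n$. Concretely, identify $N\cong\mathbb F_{q^n}$ and embed $\mathbb F_{q^n}$ into a suitable copy of $\mathbb F_{q^\ell}$ inside $W$ when $n\mid\ell$, or use the general Gabidulin construction of $\mathbb F_q$-linear maps $x\mapsto\alpha x$. Consider the map $\mathcal C\times\mathcal A\to G$, $(C,A)\mapsto C+A$. It is injective: if $C+A=C'+A'$, then $C-C'=A'-A$ has rank at most $n-1$, which forces $C=C'$ and hence $A=A'$. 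Therefore $|\mathcal A|\le q^{\ell n}/q^\ell=q^{\ell(n-1)}$. One checks directly that the star $\{P: L\subseteq P\}$ has exactly this size. Writing $L=\langle v+w\rangle$ with $v\in N$, the star corresponds to $\{A: A(v)=w\}$, whose size is $q^{\ell(n-1)}$. When $\ell=n$, the family $\mathcal D_U$ is the set $\{A: \operatorname{im}(A-A_0)\subseteq W'\}$ for a hyperplane $W'$ of $W$, which also has size $q^{n(n-1)}$.

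\textbf{Equality case.} I expect this to be the main obstacle. If $|\mathcal A|=q^{\ell(n-1)}$, then $\mathcal A$ tiles $G$ together with every MRD code. The same holds for every image of $\mathcal C$ under the automorphism group of the bilinear forms graph, namely $A\mapsto XAY+B$ with $X,Y$ invertible. In Delsarte's theory this means that the characteristic vector of $\mathcal A$ lies in the span of the trivial eigenspace together with the eigenspaces annihilated by MRD codes. That span is the first eigenspace, spanned by the characteristic vectors of the sets $\{A:A(v)=w\}$, together with, when $\ell=n$, its transpose analogue $\{A: \phi\circ A=\psi\}$.

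I would then argue combinatorially. Take $v$ minimizing the number of distinct values $A(v)$ for $A\in\mathcal A$. If some $v$ has $A(v)$ constant, the family lies in a star and equality forces it to be the whole star. Otherwise, I would induct on $n$ by restricting to a hyperplane $H\subseteq N$. The restricted families are intersecting families in the smaller space $\Hom(H,W)$, where $\ell>n-1$ so only stars can be extremal. Counting then forces either a common line, which gives the star, or, only possible when $\ell=n$, a common hyperplane of $W$ containing all differences, which gives $\mathcal D_U$ with $U=N'\oplus W'$ suitably positioned.

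If this induction becomes messy, the fallback is to invoke Tanaka's classification of extremal EKR sets in the bilinear forms graph. It yields exactly these two types, and the second type requires $\ell=n$.
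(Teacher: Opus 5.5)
The paper gives no proof of this statement. It is quoted from Huang and used only as an input to Theorem~\ref{thm:attenuated-l2-ekr}, so your proposal has to stand on its own. The first half does. The reformulation $\dim(P_A\cap P_B)=n-\rank(A-B)$ is right, and the injection $\mathcal C\times\mathcal A\to G$ against a full-rank MRD code of size $q^\ell$ is a complete proof of $|\mathcal F|\le q^{\ell(n-1)}$. Your size counts for the star and for $\mathcal D_U$ are also correct. The code is simpler than you describe: identify $W$ with $\mathbb F_{q^\ell}$, fix any $\mathbb F_q$-linear injection $\iota:N\to\mathbb F_{q^\ell}$ (it exists since $\ell\ge n$), and take $\{x\mapsto\alpha\,\iota(x):\alpha\in\mathbb F_{q^\ell}\}$. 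No condition $n\mid\ell$ is needed.

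The gap is the characterization of equality, which is the substantive part of the statement.

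\emph{The spectral step is not enough.} Even granting the Delsarte step, it only yields that $1_{\mathcal A}$ has Fourier support in $\{D:\rank D\le 1\}$. That space is spanned by the star indicators alone, for every $\ell\ge n$: for $D=\alpha(\cdot)v$ one has $\chi_D(A)=\psi(\alpha(A(v)))$. The indicators of $\{A:\phi\circ A=\gamma\}$, with $\phi\in W^*$ and $\gamma\in N^*$, also lie in it for every $\ell$. So the spectral information neither pins down the extremal families nor produces the $\ell=n$ versus $\ell>n$ dichotomy. The intersecting condition has to be used again.

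\emph{The induction fails.} Restricting to a hyperplane $H\subseteq N$ does not give intersecting families in $\Hom(H,W)$. That would require $\rank((A-B)|_H)\le n-2$, but you only know $\rank(A-B)\le n-1$, and $\rank((A-B)|_H)=n-1$ whenever $\ker(A-B)$ is a line not contained in $H$. For instance, for $v\notin H$ the star $\{A:A(v)=w\}$ restricts bijectively onto all of $\Hom(H,W)$. That set is not intersecting, since it contains $0$ together with injective maps. Hence the inductive hypothesis is unavailable, and ``counting then forces a common line or a common hyperplane of $W$'' restates the conclusion rather than deriving it.

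As written, your equality case rests entirely on the fallback citation of Tanaka's classification. That is a legitimate reference, no weaker than the paper's reliance on Huang. But then what you have actually proved is the bound, not the characterization.
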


\subsection{Main results}
In this paper, we try to establish the EKR theorems in $\ell_2$-norm for the intersecting families listed in Theorems \ref{thm:Hsieh}-\ref{thm:Huang}. The main results are the following three theorems.

\begin{theorem}\label{thm:vector-l2-ekr}
Assume $n>2k$.  If $\mathcal F\subseteq\Gr{V}{k}$ is intersecting, then
\begin{equation}\label{eq:vector-l2-main}
\co(\mathcal F)
\le
\qbinom{n-1}{k-1}
\bigl([k-1]_q[n-k+1]_q+q^{k-1}\bigr).
\end{equation}
Equality holds if and only if $\mathcal F=\{F\in \Gr{V}{k}:\ X\subseteq F\}$ for some 1-subspace $X\subseteq V$.
\end{theorem}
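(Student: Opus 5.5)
Let $M$ be the incidence matrix between $\Gr{V}{k-1}$ (rows) and $\Gr{V}{k}$ (columns), and let $\mathbf 1_{\mathcal F}$ be the characteristic vector of $\mathcal F$. Then $\co(\mathcal F)=\|M\mathbf 1_{\mathcal F}\|^2=\mathbf 1_{\mathcal F}^{T}M^{T}M\mathbf 1_{\mathcal F}$. For $F,G\in\Gr{V}{k}$, the entry $(M^TM)_{F,G}$ counts the $(k-1)$-subspaces contained in both. It equals $\qnum{k}$ if $F=G$, equals $1$ if $\dim(F\cap G)=k-1$, and equals $0$ otherwise. Hence
\[
M^TM=\qnum{k} I+A_1,
\]
where $A_1$ is the adjacency matrix of the Grassmann graph $J_q(n,k)$. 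Therefore
\[
\co(\mathcal F)=\qnum{k}|\mathcal F|+2e(\mathcal F),
\]
where $e(\mathcal F)$ is the number of pairs in $\mathcal F$ meeting in dimension $k-1$.

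For the star $\mathcal S_X$ of a $1$-subspace $X$, each $E\supseteq X$ lies in $\qnum{n-k+1}$ members, and each $E\not\supseteq X$ lies in exactly one member, namely $E+X$. So
\[
\co(\mathcal S_X)=\qbinom{n-1}{k-2}\qnum{n-k+1}^2+\Bigl(\qbinom{n}{k-1}-\qbinom{n-1}{k-2}\Bigr).
\]
Using \eqref{eq:qbinom-ratio}, \eqref{eq:q-pascal} and \eqref{eq:qinteger-sum}, this simplifies to the right side of \eqref{eq:vector-l2-main}. Equivalently, each member of the star has exactly $\qnum{k-1}(\qnum{n-k+1}-1)$ star neighbours, namely those obtained by fixing a $(k-1)$-subspace through $X$. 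This gives the form $\qnum{k}+\qnum{k-1}(\qnum{n-k+1}-1)=\qnum{k-1}\qnum{n-k+1}+q^{k-1}$.

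The main step is to show that for intersecting $\mathcal F$,
\[
\qnum{k}|\mathcal F|+2e(\mathcal F)\le \qbinom{n-1}{k-1}\bigl(\qnum{k-1}\qnum{n-k+1}+q^{k-1}\bigr).
\]
I would use a weighted Hoffman/ratio-type bound. The Grassmann scheme has eigenvalues $\theta_j$ of $A_1$ on the eigenspaces $V_j$, $j=0,\dots,k$, with $\theta_0=q\qnum{k}\qnum{n-k}$ and $\theta_1=q^2\qnum{k-1}\qnum{n-k-1}-1$. Write $\mathbf 1_{\mathcal F}=\frac{|\mathcal F|}{N}\mathbf 1+v_1+\dots+v_k$ with $N=\qbinom nk$. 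Then
\[
\co=\qnum k|\mathcal F|+\theta_0\frac{|\mathcal F|^2}{N}+\sum_{j\ge1}\theta_j\|v_j\|^2 .
\]
Since $\theta_1$ is the largest nontrivial eigenvalue, this is at most $\qnum k|\mathcal F|+\theta_0\frac{|\mathcal F|^2}{N}+\theta_1\bigl(|\mathcal F|-\frac{|\mathcal F|^2}{N}\bigr)$. The right side is increasing in $|\mathcal F|$ as long as $\theta_0\ge\theta_1$. Plugging in Theorem \ref{thm:Hsieh}'s bound $|\mathcal F|\le\qbinom{n-1}{k-1}$ gives an upper bound. One then checks whether this bound equals the star value.

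It does: the star's characteristic vector lies in $V_0\oplus V_1$, a standard fact since it is a column of the $1$-vs-$k$ incidence matrix. So for the star, the inequality $\sum\theta_j\|v_j\|^2\le\theta_1\sum\|v_j\|^2$ is tight, and both bounds coincide when $|\mathcal F|=\qbinom{n-1}{k-1}$. The bound is therefore exactly sharp, and this yields \eqref{eq:vector-l2-main}.

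For equality, we need $|\mathcal F|=\qbinom{n-1}{k-1}$, because the bound is strictly increasing in $|\mathcal F|$. Then Theorem \ref{thm:Hsieh} gives that $\mathcal F$ is a star.

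The main obstacle is verifying that $\theta_1$ is the second largest eigenvalue overall. Among $\theta_j=q^{j+1}\qnum{k-j}\qnum{n-k-j}-\qnum j$, this follows because the sequence is decreasing in $j$. We also need $\theta_0>\theta_1$ so that monotonicity in $|\mathcal F|$ holds. Both are routine for $n>2k$. If the identification of the star with $V_0\oplus V_1$ caused trouble, I would instead verify the final algebraic identity directly with \eqref{eq:qbinom-ratio}.
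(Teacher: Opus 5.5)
Your proposal is correct and is essentially the paper's proof: your $M^TM=\qnum{k}I+A_1$ is the paper's $T=U_{k-1}D_k$, since $\qnum{k}+\theta_j=q^j\qnum{k-j}\qnum{n-k-j+1}$ are exactly Fulman's eigenvalues. Like the paper, you then apply the ratio-type bound, use monotonicity in $|\mathcal F|$, substitute Hsieh's bound, and take the equality case from Hsieh's characterization (monotonicity also needs the linear coefficient $\qnum{k}+\theta_1=q\qnum{k-1}\qnum{n-k}\ge 0$, not only $\theta_0>\theta_1$, which holds); the only differences are cosmetic: you confirm sharpness via the star lying in $V_0\oplus V_1$ or via the neighbour count, whereas the paper simplifies $\mu_1\bigl(\qbinom{n-1}{k-1}\bigr)$ algebraically and computes $\co(\mathcal S_X)$ directly from codegrees.
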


\begin{theorem}\label{thm:affine-l2-ekr}
Assume $k\ge 1$ and $n\ge2k+1$.  If $\mathcal F\subseteq\mathcal M_k$ is an intersecting family of $k$-flats in $\AG(n,q)$, then
\begin{equation}\label{eq:affine-main-first}
\operatorname{co}^{\mathrm{AG}}_2(\mathcal F)
\le q[k]_q\qbinom{n}{k}\bigl([n-k]_q+1\bigr).
\end{equation}
Equivalently,
\begin{equation}\label{eq:affine-main-second}
\operatorname{co}^{\mathrm{AG}}_2(\mathcal F)
\le \qbinom{n}{k-1}\bigl([n-k+1]_q^2+q^{n-k+1}-1\bigr).
\end{equation}
Equality holds if and only if $\mathcal F$ consists of all $k$-flats containing a fixed $0$-flat (i.e., a point) in $\AG(n,q)$.
\end{theorem}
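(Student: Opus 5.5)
The plan is to turn $\operatorname{co}^{\mathrm{AG}}_2$ into a quadratic form and bound it spectrally. Let $M$ be the incidence matrix between $\mathcal M_{k-1}$ and $\mathcal M_k$, and let $x=\mathbf 1_{\mathcal F}$. Then $\operatorname{co}^{\mathrm{AG}}_2(\mathcal F)=x^{T}M^{T}Mx$.

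Two distinct $k$-flats that share a $(k-1)$-flat meet exactly in that flat, so they share at most one. A $k$-flat contains $q[k]_q$ flats of dimension $k-1$. Hence
\[
M^{T}M=q[k]_q I+A,
\]
where $A$ is the adjacency matrix of the graph $\Gamma$ on $\mathcal M_k$ with $P\sim Q$ iff $\dim(P\cap Q)=k-1$. This gives
\[
\operatorname{co}^{\mathrm{AG}}_2(\mathcal F)=q[k]_q|\mathcal F|+2e_\Gamma(\mathcal F).
\]
For the point-star $\mathcal S_x$, a member $P$ is adjacent inside $\mathcal S_x$ to exactly the flats through one of the $[k]_q$ hyperplanes of $P$ containing $x$. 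Each such hyperplane lies in $[n-k+1]_q-1=q[n-k]_q$ further $k$-flats. So the internal degree is $q[k]_q[n-k]_q$, and
\[
\operatorname{co}^{\mathrm{AG}}_2(\mathcal S_x)=q[k]_q\qbinom nk(1+[n-k]_q),
\]
which is \eqref{eq:affine-main-first}. The equivalence with \eqref{eq:affine-main-second} is a routine check using \eqref{eq:qbinom-ratio} and \eqref{eq:qinteger-sum}.

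It therefore suffices to show that for intersecting $\mathcal F$,
\[
x^{T}Ax\le q[k]_q[n-k]_q\,|\mathcal F|,
\]
with equality only for stars. Combined with the Guo--Xu bound $|\mathcal F|\le\qbinom nk$ (Theorem \ref{thm:guo-xu}), this finishes the proof. Equality in the final bound forces $|\mathcal F|=\qbinom nk$, and the uniqueness part of Theorem \ref{thm:guo-xu} then gives the star.

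To prove the displayed inequality I will use a weighted Hoffman/Delsarte argument. Let $B$ be the disjointness matrix, with $B_{PQ}=1$ iff $P\cap Q=\emptyset$. Intersecting means $x^{T}Bx=0$. So for any real $c$,
\[
x^{T}Ax=x^{T}(A+cB)x\le \lambda_1\frac{|\mathcal F|^2}{N}+\lambda_{\max}'\Bigl(|\mathcal F|-\frac{|\mathcal F|^2}{N}\Bigr),
\]
where $N=q^{n-k}\qbinom nk$, $\lambda_1$ is the eigenvalue of $A+cB$ on $\mathbf 1$ (both matrices are regular), and $\lambda'_{\max}$ is the largest eigenvalue on $\mathbf 1^{\perp}$.

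Computing these eigenvalues is where the real work lies. The group $\mathrm{AGL}(n,q)$ acts transitively on $k$-flats, and $A$ and $B$ commute with it. I would diagonalize by splitting $\mathbb R^{\mathcal M_k}$ by direction: for each direction $U$, the $q^{n-k}$ parallel flats form a copy of the group $V/U$. Fourier analysis on the translation group (characters of $V$) block-diagonalizes both matrices. The trivial character yields the Grassmann scheme on directions, where the eigenvalues are the known $q$-Kneser/Grassmann eigenvalues. A nontrivial character $\chi$ with kernel hyperplane restricts to the directions $U$ lying in that kernel.

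I would then choose $c$, presumably negative, so that the right-hand side is at most $q[k]_q[n-k]_q|\mathcal F|$ whenever $|\mathcal F|\le\qbinom nk$. This is plausible because the right-hand side is increasing in $|\mathcal F|$ and attains equality at the star size $\qbinom nk$. Strictness for non-star extremal sizes then comes from equality in Theorem \ref{thm:guo-xu}.

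The main obstacle is this eigenvalue computation and the choice of $c$ under the hypothesis $n\ge 2k+1$. If the full spectral computation proves too messy, I would fall back on a local double-counting argument. For $P\in\mathcal F$, bound its $\Gamma$-degree inside $\mathcal F$ by summing $d_{\mathcal F}(E)-1$ over the hyperplanes $E\subset P$. I would use that, when $n\ge 2k+1$, a $k$-flat not through $x$ cannot meet all $k$-flats through a large subset of hyperplanes. Then I would average over $P$ against the Guo--Xu bound.
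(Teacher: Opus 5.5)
Your reduction is correct and follows the paper's architecture: $\operatorname{co}^{\mathrm{AG}}_2(\mathcal F)=x^TM^TMx$ with $M^TM=q[k]_qI+A$, a spectral bound in terms of $|\mathcal F|$ via the translation-character decomposition (your description matches Lemma~\ref{lem:affine-character-components}), and then Theorem~\ref{thm:guo-xu} for both the size bound and uniqueness. However, the step that carries all the content is never proved, namely $x^TAx\le q[k]_q[n-k]_q|\mathcal F|$. You leave the eigenvalues uncomputed and $c$ unchosen. Your remark that the bound is ``plausible because it attains equality at the star size'' is exactly what needs verifying. Whether the Hoffman expression at $|\mathcal F|=\qbinom{n}{k}$ collapses to the star value depends on which eigenspace of $A+cB$ carries the top eigenvalue on $\mathbf 1^\perp$, and you have not determined that. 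The fallback double-counting sketch is too vague to fill the gap. So, as written, the decisive computation is missing.

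The computation is short, and no disjointness term is needed: take $c=0$. On a nontrivial component $\mathcal C_\eta$, functions have the form $\psi(\eta(a))g(U)$, supported on directions $U\subseteq\ker\eta$. There $M^TM$ acts as $q\,CC^T$ on $\Gr{\ker\eta}{k}$: each $(k-1)$-flat of $a+U$ is $a+u+H$ with $q$ choices of coset, and $\eta$ is constant along them. Since $\dim\ker\eta=n-1$, the largest eigenvalue of this operator is its row sum $q[k]_q[n-k]_q$. On $\mathcal C_0\cap\mathbf 1^\perp$ it acts as $q$ times the Grassmann operator on $\Gr{V}{k}$, with top eigenvalue $q^2[k-1]_q[n-k]_q$. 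This is smaller because $[k]_q=1+q[k-1]_q$. So the second eigenvalue comes from the nontrivial characters, not from the Grassmann part. Hence $\lambda'_{\max}(A)=q[k]_q([n-k]_q-1)$, while $A\mathbf 1=q^2[k]_q[n-k]_q\mathbf 1$. Using $(q-1)[n-k]_q+1=q^{n-k}$ and $N=q^{n-k}\qbinom{n}{k}$, your Hoffman inequality with $c=0$ becomes
\[
x^TAx\le q[k]_q\bigl([n-k]_q-1\bigr)m+\frac{q[k]_q}{\qbinom{n}{k}}\,m^2\le q[k]_q[n-k]_q\,m\qquad\Bigl(m=|\mathcal F|\le\qbinom{n}{k}\Bigr).
\]
After adding $q[k]_q m$ to both sides, this is exactly the paper's Lemma~\ref{lem:affine-bey}. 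The intersecting hypothesis enters only through Theorem~\ref{thm:guo-xu}.
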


\begin{theorem}\label{thm:attenuated-l2-ekr}
Assume $n\ge2$ and $\ell\ge n$.  If $\mathcal F\subseteq\mathcal M_n$ is intersecting, then
\begin{equation}\label{eq:attenuated-main}
\operatorname{co}^{\mathrm{AT}}_2(\mathcal F)
\le q^{\ell(n-1)}\bigl(q^\ell[n-1]_q+q^{n-1}\bigr).
\end{equation}
If $\ell>n$, equality holds exactly for the families consisting of all members of $\mathcal M_n$ that contain a fixed $1$-subspace $L\subseteq V$ with $L\cap W=\{\bm{0}\}$.  If $\ell=n$, besides the above family, equality also holds exactly for the dual families in Theorem \ref{thm:Huang}.
\end{theorem}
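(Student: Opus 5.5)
We reduce the codegree squared sum to an edge count in a Cayley graph on $G=\Hom(N,W)$. We then bound that edge count spectrally, using the size bound of Theorem~\ref{thm:Huang} as input.

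\textbf{Step 1 (reformulation).} Expand $d_{\mathcal F}(E)^2$ as the number of ordered pairs $(P,Q)\in\mathcal F^2$ with $E\subseteq P\cap Q$. For $P_A,P_B\in\mathcal M_n$ we have $P_A\cap P_B=\{x+A(x):x\in\ker(A-B)\}$, so $\dim(P_A\cap P_B)=n-\rank(A-B)$.
\begin{itemize}
\item Pairs with $P=Q$: each $P$ contains exactly $[n]_q$ members of $\mathcal M_{n-1}$ (the images of hyperplanes of $N$).
\item Pairs with $P\neq Q$: such a pair contributes exactly when $\rank(A-B)=1$, and then it contributes for the single $E=P\cap Q$.
\end{itemize}
Let $\Gamma$ be the Cayley graph on $G$ whose connection set $S$ is the set of rank-one maps (the bilinear forms graph), and let $M$ be its adjacency matrix. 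Then
\[
\operatorname{co}^{\mathrm{AT}}_2(\mathcal F)=[n]_q|\mathcal F|+\mathbf 1_{\mathcal F}^{T}M\mathbf 1_{\mathcal F}.
\]
Counting rank-one maps gives the degree $d=|S|=[n]_q(q^\ell-1)$. In this language, $\mathcal F$ is intersecting exactly when no difference $A-B$ with $A,B\in\mathcal F$ has full rank $n$.

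Sanity check on the star $\mathcal F_L$: let $L=\langle v+w\rangle$ with $v\in N$, $w\in W$, so $\mathcal F_L=\{A: A(v)=w\}$. The rank-one neighbours of $A$ inside $\mathcal F_L$ are the rank-one maps killing $v$. There are $[n-1]_q(q^\ell-1)$ of them. Using $[n]_q=[n-1]_q+q^{n-1}$, this gives
\[
\operatorname{co}^{\mathrm{AT}}_2(\mathcal F_L)=q^{\ell(n-1)}\bigl(q^\ell[n-1]_q+q^{n-1}\bigr),
\]
which is the claimed value. A similar count should confirm the value for $\mathcal D_U$ when $\ell=n$.

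\textbf{Step 2 (spectral bound).} Since $\Gamma$ is a Cayley graph on an abelian group, its eigenvectors are the characters of $G$. Its eigenvalues are those of the bilinear forms scheme:
\[
\theta_j=\frac{q^{n+\ell-j}-q^\ell-q^n+1}{q-1},\qquad 0\le j\le n,
\]
with $\theta_0=d$ and $\theta_1$ the second largest. Write $\alpha=|\mathcal F|/|G|$ and decompose $\mathbf 1_{\mathcal F}$ into its constant part plus an orthogonal part. This gives
\[
\mathbf 1_{\mathcal F}^{T}M\mathbf 1_{\mathcal F}\le |\mathcal F|\bigl(d\alpha+\theta_1(1-\alpha)\bigr).
\]
The right-hand side per $|\mathcal F|$ is increasing in $\alpha$ because $d>\theta_1$. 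So we may substitute Huang's bound $|\mathcal F|\le q^{\ell(n-1)}$, i.e.\ $\alpha\le q^{-\ell}$. A direct computation should show
\[
d q^{-\ell}+\theta_1(1-q^{-\ell})=[n-1]_q(q^\ell-1).
\]
Combined with Step 1, this yields \eqref{eq:attenuated-main}.

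\textbf{Step 3 (equality).} Equality forces two things:
\begin{itemize}
\item $|\mathcal F|=q^{\ell(n-1)}$, since otherwise the monotone step in Step 2 is strict;
\item $\mathbf 1_{\mathcal F}-\alpha\mathbf 1$ lies in the $\theta_1$-eigenspace.
\end{itemize}
The first condition alone already lets us invoke the characterization in Theorem~\ref{thm:Huang}. That gives the stars $\mathcal F_L$, and for $\ell=n$ also the dual families $\mathcal D_U$. Conversely, the Step 1 computation verifies that these families attain equality.

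\textbf{Main obstacle.} The main obstacle is Step 2. First, the eigenvalue formula for the rank-one Cayley graph has to be justified, either from the known eigenvalues of the bilinear forms scheme or by a direct character-sum computation over $\Hom(N,W)$. Second, one must check that $\theta_1$ dominates all $\theta_j$ with $j\ge1$ in absolute value where needed; only the largest nontrivial eigenvalue matters for an upper bound, which simplifies this. Finally, the algebraic identity in Step 2 has to be checked carefully. If the identity fails, I would instead try the weighted Hoffman-type argument, carried out directly on the intersecting condition.
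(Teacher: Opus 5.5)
Your proposal is correct and is essentially the paper's argument. The paper computes the eigenvalues of $T=BB^T=[n]_qI+M$ on the characters $\chi_D$ directly as $q^\ell[n-\rank(D)]_q$; subtracting $[n]_q$ gives exactly your $\theta_j$, and your identity $dq^{-\ell}+\theta_1(1-q^{-\ell})=[n-1]_q(q^\ell-1)$ does hold. It then combines the same ratio-type bound with Huang's size bound, monotonicity, and Huang's equality characterization. The one step you leave as ``should'' that genuinely needs doing is the converse for $\mathcal D_U$ when $\ell=n$, because $|\mathcal D_U|=q^{\ell(n-1)}$ alone does not force equality in the spectral bound. The paper checks it by direct count: each $E\in\mathcal M_{n-1}$ with $E\subseteq U$ has codegree $q^{n-1}$, and there are $[n]_q q^{(n-1)^2}$ such $E$.
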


The remainder of this paper is organized as follows. In Section 2, by defining a new incidence matrix, we convert the upper bound of the $\ell_2$-norm under consideration into a function of the eigenvalues of the incidence matrix and the size of the family, which provides a unified framework for proving Theorems \ref{thm:vector-l2-ekr}-\ref{thm:attenuated-l2-ekr}. 
Section 3 introduces some additional tools concerning groups and the spectrum of some incidence operator. Sections 4-6 are devoted to the three finite spaces, respectively.

\section{A key lemma}

Let $\mathscr{X}$ be the family of all $k$-subspaces, $k$-flats, $n$-subspaces $L$ with $L\cap W=\{\bm{0}\}$, and $\mathscr{Y}$ be the family of all $(k-1)$-subspaces, $(k-1)$-flats, $(n-1)$-subspaces $L'$ with $L'\cap W=\{\bm{0}\}$ in the three spaces, respectively. Let $B=(B_{X,Y})_{|\mathscr{X}|\times |\mathscr{Y}|}$ be the $0$-$1$ incidence matrix with rows indexed by $\mathscr{X}$ and columns indexed by $\mathscr{Y}$ such that for any $X\in \mathscr{X}$ and $Y\in \mathscr{Y}$,  $B_{X,Y}=1$ if and only if $Y\subseteq X$. Set $T=BB^T$.

\begin{lemma}\label{lem:general-spectral}

Suppose the all-one vector $\mathbf 1\in\mathbb R^{|\mathscr{X}|}$ is an eigenvector of $T$ with eigenvalue $\lambda_0$, and every vector $\bm{z}$ orthogonal to $\mathbf 1$ has Rayleigh quotient at most $\lambda_1$, i.e.,
\[
\frac{\langle \bm{z},T\bm{z}\rangle}{\langle \bm{z},\bm{z}\rangle}\le \lambda_1
\qquad(\bm{z}\perp \mathbf 1,\\ \bm{z}\ne 0).
\]
If $\mathcal A\subseteq \mathscr{X}$, $m=|\mathcal A|$, and $v=|\mathscr{X}|$, then
\[
\co(\mathcal A)\le \lambda_1m+(\lambda_0-\lambda_1)\frac{m^2}{v}.
\]
\end{lemma}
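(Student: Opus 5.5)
Let $\mathbf x=\mathbf 1_{\mathcal A}\in\mathbb R^{|\mathscr X|}$ be the characteristic vector of $\mathcal A$. The plan rests on expressing the codegree vector as a matrix product. For $Y\in\mathscr Y$,
\[
(B^T\mathbf x)_Y=\sum_{X\in\mathcal A}B_{X,Y}=|\{X\in\mathcal A: Y\subseteq X\}|=d_{\mathcal A}(Y).
\]
Consequently,
\[
\co(\mathcal A)=\|B^T\mathbf x\|^2=\langle \mathbf x,BB^T\mathbf x\rangle=\langle\mathbf x,T\mathbf x\rangle,
\]
and this holds uniformly in all three spaces, since the same definition of $B$ is used in each. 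The task is therefore to bound a quadratic form of $T$ evaluated at a $0$-$1$ vector.

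Next I decompose $\mathbf x$ as $\mathbf x=\frac{m}{v}\mathbf 1+\bm z$ with $\bm z\perp\mathbf 1$; this uses $\langle\mathbf x,\mathbf 1\rangle=m$ and $\langle\mathbf 1,\mathbf 1\rangle=v$. Since $T$ is symmetric and $T\mathbf 1=\lambda_0\mathbf 1$, we get $\langle \bm z,T\mathbf 1\rangle=\lambda_0\langle\bm z,\mathbf 1\rangle=0$, so the cross terms vanish. This gives
\[
\langle\mathbf x,T\mathbf x\rangle=\frac{m^2}{v^2}\lambda_0 v+\langle\bm z,T\bm z\rangle\le \lambda_0\frac{m^2}{v}+\lambda_1\|\bm z\|^2 .
\]
The inequality uses the Rayleigh quotient hypothesis. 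If $\bm z=0$, the bound on $\langle\bm z,T\bm z\rangle$ holds trivially.

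Finally I compute $\|\bm z\|^2=\|\mathbf x\|^2-\frac{m^2}{v}=m-\frac{m^2}{v}$, which follows from Pythagoras and from $\|\mathbf x\|^2=m$ for a $0$-$1$ vector. Substituting this yields
\[
\co(\mathcal A)\le\lambda_1 m+(\lambda_0-\lambda_1)\frac{m^2}{v}.
\]
No step is a real obstacle. The only point needing care is the identification $\co(\mathcal A)=\langle\mathbf x,T\mathbf x\rangle$, which requires that the column index set $\mathscr Y$ run over exactly the $(k-1)$-objects appearing in the definitions of $\co$, $\operatorname{co}^{\mathrm{AG}}_2$ and $\operatorname{co}^{\mathrm{AT}}_2$. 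The second point is that $\lambda_1$ need not be nonnegative, but the argument only uses it as a multiplier of $\|\bm z\|^2\ge0$ via the Rayleigh quotient bound, so its sign plays no role.
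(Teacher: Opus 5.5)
Your proposal is correct and follows essentially the same route as the paper: you write $\co(\mathcal A)=\mathbf x^TT\mathbf x$ for the characteristic vector, decompose $\mathbf x=\frac{m}{v}\mathbf 1+\bm z$ with $\bm z\perp\mathbf 1$, and use symmetry of $T$ to kill the cross terms. You then apply the Rayleigh bound together with $\|\bm z\|^2=m-\frac{m^2}{v}$, exactly as the paper does.
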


\begin{proof}
Let $\bm{x}\in\{0,1\}^{|\mathscr{X}|}$ be the characteristic vector of $\mathcal A$.  The $Y$-th coordinate of $B^T\bm{x}$ is exactly
\[
(B^T\bm{x})_Y=\sum_{X\in \mathscr{X}}B_{X,Y}\bm{x}_X=|\{X\in\mathcal A:Y\subseteq X\}|=d_{\mathcal A}(Y),
\]
so
\[
\co(\mathcal A)=\|B^T\bm{x}\|^2=\bm{x}^TBB^T\bm{x}=\bm{x}^TT\bm{x}.
\]
Decompose $\bm{x}$ into a vector in $span\{\mathbf{1}\}$ and a vector in its orthogonal complement:
\[
\bm{x}=\frac{m}{v}\mathbf 1+\bm{y},
\qquad \bm{y}\perp \mathbf 1.
\]
Since $T\mathbf 1=\lambda_0\mathbf 1$ and $T$ is real symmetric, we have
\[
\left\langle \frac{m}{v}\mathbf 1,T\bm{y}\right\rangle
=\frac{m}{v}\langle T\mathbf 1,\bm{y}\rangle
=\frac{m}{v}\lambda_0\langle \mathbf 1,\bm{y}\rangle=0.
\]
Moreover,
\[
\left\|\frac{m}{v}\mathbf 1\right\|^2=\frac{m^2}{v},
\qquad
\|\bm{y}\|^2=\|\bm{x}\|^2-\frac{m^2}{v}=m-\frac{m^2}{v}.
\]
Therefore,
\[
\bm{x}^TT\bm{x}=\lambda_0\frac{m^2}{v}+\bm{y}^TT\bm{y}
\le \lambda_0\frac{m^2}{v}+\lambda_1\left(m-\frac{m^2}{v}\right).
\]
The proof is complete.
\end{proof}

\section{Auxiliary Tools}

This section reviews some known results that will be used to calculate the spectrum  of the matrix $T=BB^T$ defined on the three spaces, which together with Lemma \ref{lem:general-spectral} can help us deduce the expected upper bounds.

\vskip 2mm
\noindent{\bf 1. Finite abelian group: Actions, Characters, and Operators}
\vskip 2mm

All finite abelian groups are written additively. A finite-dimensional vector space $V$ over $\Fq$ is regarded as a finite abelian group $(V,+)$ under addition. 

Let $S$ be a finite set and let $\mathbb{C}^S$ denote the vector space of all complex-valued functions on $S$. We equip $\mathbb{C}^S$ with the standard Hermitian inner product 
\[
    \langle f,g\rangle_S=\sum\limits_{x\in S}f(x)\overline{g(x)}, \quad f,g\in \mathbb{C}^S.
\]
\begin{definition}
A character of a finite additive group $G$ is a group homomorphism $\chi : G \to \mathbb{C}^\times$, thus  
\[\chi(x + y) = \chi(x)\chi(y) \quad (x, y \in G),\]
where $\mathbb{C}^\times=\mathbb{C}\setminus\{0\}$. The character identically equal to 1 is called trivial; all other characters are called nontrivial. 
\end{definition}
Since $G$ is finite, every character takes values on the unit circle, and therefore  
\[\chi(-x) = \chi(x)^{-1} = \overline{\chi(x)}.\]
An additive character of $\Fq$ is a character of the additive group $(\Fq,+)$. Fix throughout a nontrivial additive character $\psi : (\Fq,+) \to \mathbb{C}^\times$. 
\begin{lemma}\label{lem: character sum}(\cite{Lidl})
Let $G$ be a finite abelian group, and let $\chi$ be a character of $G$. Then
\[\sum_{x \in G} \chi(x) = 
\begin{cases} 
|G|, & \chi \text{ is trivial,} \\
0, & \chi \text{ is nontrivial.}
\end{cases}
\]
More generally, for every subgroup $H \leq G$,
\[\sum_{x \in H} \chi(x) = 
\begin{cases} 
|H|, & \chi|_H \text{ is trivial,} \\
0, & \chi|_H \text{ is nontrivial.}
\end{cases}
\]
Consequently, for two characters $\chi, \xi$ of $G$,
\[\sum_{x \in G} \chi(x) \overline{\xi(x)} = 
\begin{cases} 
|G|, & \chi = \xi, \\
0, & \chi \neq \xi.
\end{cases}
\]
\end{lemma}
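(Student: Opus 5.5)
The statement just given is the standard character-orthogonality lemma (cited from \cite{Lidl}), and I would prove it directly from the homomorphism property. For the first assertion, the trivial case is immediate, since each of the $|G|$ terms equals $1$. If $\chi$ is nontrivial, I choose $y\in G$ with $\chi(y)\neq 1$ and put $S=\sum_{x\in G}\chi(x)$. Translation $x\mapsto x+y$ is a bijection of $G$, so
\[
\chi(y)S=\sum_{x\in G}\chi(x+y)=\sum_{x\in G}\chi(x)=S .
\]
This gives $(\chi(y)-1)S=0$, and since $\chi(y)\neq 1$ we get $S=0$.

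For the subgroup version, I apply the first assertion to the restriction $\chi|_H$. This restriction is again a homomorphism $H\to\mathbb C^\times$, hence a character of the finite abelian group $H$. The sum over $H$ is therefore $|H|$ when $\chi|_H$ is trivial and $0$ otherwise.

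For the orthogonality relation, I first note that $\overline{\xi(x)}=\xi(x)^{-1}$, because $\xi$ takes values on the unit circle, as observed just before the lemma. So $\eta:=\chi\overline{\xi}$, given by $x\mapsto\chi(x)\xi(x)^{-1}$, is a product of homomorphisms into the abelian group $\mathbb C^\times$, and is therefore itself a character of $G$. This character $\eta$ is trivial exactly when $\chi=\xi$. Applying the first assertion to $\eta$ then yields $|G|$ if $\chi=\xi$ and $0$ otherwise.

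No step here is a real obstacle. The only points needing care are that $x\mapsto x+y$ permutes $G$ and that $\overline{\xi}$ is again a character. The latter uses $|\xi(x)|=1$, which holds because $\xi(x)^{|G|}=\xi(|G|x)=\xi(0)=1$.
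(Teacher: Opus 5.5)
Your proof is correct, and it is the standard argument: translation invariance forces the sum of a nontrivial character to vanish, restriction handles the subgroup case, and the first part applied to the character $\chi\overline{\xi}$ gives orthogonality. The paper does not prove this lemma itself; it simply cites Lidl--Niederreiter, whose proof is essentially the one you give.
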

\begin{definition}
Let $V$ and $V'$ be finite-dimensional vector spaces over $\Fq$. A bilinear pairing $\beta : V' \times V \to \Fq$ is called a \textbf{perfect pairing} if the induced linear maps
\[V \to (V')^*, \ t \mapsto \beta(\cdot,t) \quad \text{and} \quad V' \to V^*, \ s \mapsto \beta(s, \cdot),\]
are isomorphisms, where $(V')^*=\Hom(V',\Fq)$ and $V^*=\Hom(V,\Fq)$. Equivalently, $\beta$ is nondegenerate in both variables: 
\[\beta(s, t) = 0 \text{ for all } s \in V' \implies t = 0,\]
and
\[\beta(s, t) = 0 \text{ for all } t \in V \implies s = 0.\]
\end{definition}
\begin{lemma}\label{lem:perfect pairing}(\cite{Ceccherini2008})
Let $\beta : V' \times V \to \Fq$ be a perfect bilinear pairing. For $s\in V'$, define
\[\chi_s(x) := \psi(\beta(s, x)) \quad (x\in V).\]
Then the functions $\{\chi_s : s \in V'\}$ are precisely the characters of the additive group $V$. Moreover, they form an orthogonal basis of $\mathbb{C}^V$.
\end{lemma}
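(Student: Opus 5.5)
The statement is Lemma~\ref{lem:perfect pairing}, a standard fact about characters of $V$. I would prove it by showing three things in turn: each $\chi_s$ is a character, distinct $s$ give distinct characters, and there are exactly $|V|$ characters. The orthogonality and basis claims then follow from Lemma~\ref{lem: character sum}.

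\emph{Each $\chi_s$ is a character.} Since $\beta(s,\cdot)$ is linear, it is additive, so
\[
\chi_s(x+y)=\psi(\beta(s,x)+\beta(s,y))=\chi_s(x)\chi_s(y).
\]

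\emph{Injectivity of $s\mapsto\chi_s$.} By bilinearity, $\chi_s\overline{\chi_{s'}}=\chi_{s-s'}$. So it suffices to show that $\chi_s$ is trivial only when $s=0$.

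Suppose $s\neq 0$. By nondegeneracy there is $t$ with $\beta(s,t)=c\neq 0$. Since $\psi$ is nontrivial, pick $a\in\Fq$ with $\psi(a)\neq 1$. Then
\[
\chi_s(ac^{-1}t)=\psi(a)\neq 1 .
\]
This rescaling by $ac^{-1}$ is the only step needing care. It uses that $\beta(s,\cdot)$ is a nonzero linear functional, hence surjective onto $\Fq$.

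\emph{Counting.} Perfectness gives $V'\cong V^*$, so $|V'|=|V|$. A finite abelian group $G$ has exactly $|G|$ characters; this is standard and covered by the citation, but I can also derive it without the structure theorem. The characters are orthogonal by the last part of Lemma~\ref{lem: character sum}, hence linearly independent in $\mathbb{C}^V$, so there are at most $|V|$ of them. The $|V'|=|V|$ distinct characters $\chi_s$ therefore exhaust all characters.

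\emph{Basis.} The family $\{\chi_s\}$ consists of $|V|=\dim\mathbb{C}^V$ pairwise orthogonal nonzero vectors, by Lemma~\ref{lem: character sum}. It is therefore an orthogonal basis of $\mathbb{C}^V$.

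I expect no real obstacle. The only point requiring attention is the injectivity step, namely producing a point where $\chi_s\neq 1$.
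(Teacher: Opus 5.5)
The paper gives no proof of this lemma; it quotes it from the literature and uses it as a black box, for the evaluation pairing $V^*\times V\to\Fq$ in the affine section and for the trace pairing in Lemma~\ref{lem: trace pairing}. Your proof is correct and complete, and each step checks out:
\begin{itemize}
\item Additivity of $\beta(s,\cdot)$ makes $\chi_s$ a character.
\item The identity $\chi_s\overline{\chi_{s'}}=\chi_{s-s'}$ reduces injectivity to showing $\chi_s$ is nontrivial for $s\neq 0$. Your rescaling $x=ac^{-1}t$ establishes this using nondegeneracy in the first variable.
\item Using the orthogonality relation in Lemma~\ref{lem: character sum} to bound the number of characters by $\dim\mathbb{C}^V=|V|=|V'|$ is not circular. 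That relation is proved without knowing how many characters exist; it only uses that $\chi\overline{\xi}$ is a character, trivial iff $\chi=\xi$.
\end{itemize}

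The standard route shows that $s\mapsto\chi_s$ is an injective homomorphism $V'\to\widehat{V}$ and then invokes $|\widehat{G}|=|G|$, typically via the decomposition of $G$ into cyclic factors. Your version replaces that counting fact with the linear-algebra bound coming from orthogonality. This avoids the structure theorem entirely and gives the orthogonal-basis claim from the same computation.

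It also shows that only two consequences of perfectness are used: $\beta(s,\cdot)\neq 0$ for $s\neq 0$, and $|V'|=|V|$. In finite dimensions with $\dim V'=\dim V$, nondegeneracy in either variable already implies both. That is exactly how the paper certifies perfectness of the trace pairing.
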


Let $G$ be a finite abelian group acting on a finite set $S$ with the action written on the right as $x\cdot t$ $(x \in S, t \in G)$. For $t\in G$, define the induced operator 
\[\rho_t : \mathbb{C}^S \to \mathbb{C}^S, \quad (\rho_t f)(x) := f(x \cdot t).\]
Then, $\rho_s \rho_t = \rho_{s+t}$, $\rho_t^{-1}=\rho_{-t}$. Moreover, each $\rho_t$ is unitary with respect to the standard Hermitian inner product on $\mathbb{C}^S$. Indeed, since $x \mapsto x \cdot t$ is a permutation of $S$, one has $\langle \rho_tf,\rho_tg\rangle_S=\langle f,g\rangle_S$. Let $\widehat G=\operatorname{Hom}(G,\mathbb C^\times)$ denote the character group of $G$.

\begin{lemma}\label{lem:character decomposition}(\cite{Ceccherini2008})
For each $\chi\in\widehat G$, define the corresponding character component by
\[\mathcal{C}_s := \{ f \in \mathbb{C}^S : \rho_t f = \chi_s(t) f \text{ for every } t \in G \}.\]
Then\[\mathbb{C}^S = \bigoplus_{s \in \widehat G} \mathcal{C}_s.\]
If a linear operator $T : \mathbb{C}^S \to \mathbb{C}^S$ satisfies $T \rho_t = \rho_t T$ for $t \in G$, then every $\mathcal{C}_s$ is $T$-invariant, i.e., $T(\mathcal{C}_s)\subseteq\mathcal{C}_s$.
\end{lemma}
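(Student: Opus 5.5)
The plan is to prove the decomposition first and then the invariance, treating the statement (as is standard) with $\chi_s$ ranging over $\widehat G$ and $\mathcal C_s$ the isotypic component of $\chi_s$.

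\emph{Decomposition.} For each $\chi\in\widehat G$ define the averaging operator
\[
\pi_\chi=\frac{1}{|G|}\sum_{u\in G}\overline{\chi(u)}\,\rho_u .
\]
Using $\rho_t\rho_u=\rho_{t+u}$ and reindexing $u\mapsto u-t$, one gets
\[
\rho_t\pi_\chi=\frac{1}{|G|}\sum_{u}\overline{\chi(u-t)}\rho_u=\chi(t)\pi_\chi .
\]
Here the step $\overline{\chi(-t)}=\chi(t)$ uses $|\chi|=1$. Hence the image of $\pi_\chi$ lies in $\mathcal C_\chi$.

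Conversely, if $f\in\mathcal C_\chi$ then $\pi_\chi f=\frac1{|G|}\sum_u\overline{\chi(u)}\chi(u)f=f$. So $\pi_\chi$ is a projection onto $\mathcal C_\chi$.

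Next compute
\[
\sum_{\chi\in\widehat G}\pi_\chi=\frac1{|G|}\sum_u\Bigl(\sum_\chi\overline{\chi(u)}\Bigr)\rho_u .
\]
The inner sum is $|G|$ if $u=0$ and $0$ otherwise, by the dual orthogonality relation. Thus $\sum_\chi\pi_\chi=\rho_0=\mathrm{id}$. This dual orthogonality is the one step not literally among the displayed statements of Lemma~\ref{lem: character sum}. I would derive it by applying Lemma~\ref{lem: character sum} to the group $\widehat G$ and the character $\chi\mapsto\chi(u)$, which is nontrivial for $u\ne0$ because characters separate points of a finite abelian group. For $G=V$ the separation follows directly from Lemma~\ref{lem:perfect pairing}: if $\psi(\beta(s,u))=1$ for all $s$, then scaling $s$ forces $\beta(\cdot,u)=0$, hence $u=0$. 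This is the main obstacle, and it is mild.

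So $\mathbb C^S=\sum_\chi\mathcal C_\chi$. The sum is direct because if $\chi\ne\xi$ and $f\in\mathcal C_\xi$, then
\[
\pi_\chi f=\frac1{|G|}\sum_u\overline{\chi(u)}\xi(u)f=0
\]
by Lemma~\ref{lem: character sum}. Hence if $\sum_\xi f_\xi=0$ with $f_\xi\in\mathcal C_\xi$, applying $\pi_\chi$ gives $f_\chi=0$.

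\emph{Invariance.} If $T\rho_t=\rho_tT$ for all $t$ and $f\in\mathcal C_\chi$, then
\[
\rho_t(Tf)=T\rho_tf=T(\chi(t)f)=\chi(t)\,Tf ,
\]
so $Tf\in\mathcal C_\chi$, as required.
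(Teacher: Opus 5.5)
The paper gives no proof of this lemma; it is quoted from Ceccherini-Silberstein et al., so there is no in-paper argument to compare against. Your proof via the isotypic projections $\pi_\chi=\frac{1}{|G|}\sum_{u}\overline{\chi(u)}\rho_u$ is correct and complete. The steps $\rho_t\pi_\chi=\chi(t)\pi_\chi$, $\pi_\chi|_{\mathcal C_\chi}=\mathrm{id}$, $\pi_\chi(\mathcal C_\xi)=0$ for $\xi\neq\chi$, and the one-line invariance argument all check out.

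Two small remarks. First, at $u=0$ the inner sum is $|\widehat G|$, not literally $|G|$. The two are equal: for general $G$ this follows from the statement in Lemma~\ref{lem: convolution} that the characters form a basis of $\mathbb C^G$. For spanning, however, you only need that coefficient to be nonzero, so nothing hinges on it. The same basis statement also gives separation of points for an arbitrary finite abelian $G$: if $\chi(u)=\chi(0)$ for all $\chi$, then every $f\in\mathbb C^G$ has $f(u)=f(0)$, and taking $f$ to be the indicator of $0$ gives $u=0$. So you need not restrict that step to $G=V$.

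Second, each $\rho_u$ is unitary with $\rho_u^*=\rho_{-u}$, so your $\pi_\chi$ is self-adjoint and hence an orthogonal projection. This gives the orthogonal decomposition $\bigoplus^{\perp}$ that Lemma~\ref{lem:affine-character-components} actually invokes, which the statement as written does not assert. It is worth recording.

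An equally standard alternative is to simultaneously diagonalize the commuting unitary family $\{\rho_t\}$. For a joint eigenvector $f$ with $\rho_t f=c(t)f$, the relation $\rho_{s+t}=\rho_s\rho_t$ forces $c$ to be a character. That route avoids dual orthogonality and separation of points altogether, whereas yours produces explicit projections onto each $\mathcal C_\chi$.
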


\begin{lemma}\label{lem: convolution}(\cite{Ceccherini2008})
Given a finite abelian group $G$, for $h, f \in \mathbb{C}^G$, define their unnormalized convolution by
\[
(h * f)(x) := \sum_{y \in G} h(x - y)f(y).
\]
Let $T : \mathbb{C}^G \to \mathbb{C}^G$ have matrix entries $T_{x,y} = h(x - y)$. Then $Tf = h * f$. Moreover, every character $\chi$ of $G$ is an eigenvector of $T$, with eigenvalue
\[
\lambda_\chi = \sum_{z \in G} h(z)\chi(-z) = \sum_{y \in G} T_{0,y}\chi(y).
\]
Thus the characters of $G$ form an orthogonal eigenbasis for $T$.
\end{lemma}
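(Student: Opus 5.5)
The plan is a direct computation from the definitions, with no spectral theory beyond the orthogonality relations of Lemma~\ref{lem: character sum}. First I will verify $Tf=h*f$. For $f\in\mathbb C^G$ and $x\in G$, the definition of matrix multiplication gives
\[
(Tf)(x)=\sum_{y\in G}T_{x,y}f(y)=\sum_{y\in G}h(x-y)f(y)=(h*f)(x).
\]

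Next I will show that each character $\chi$ is an eigenvector. Applying the previous identity to $f=\chi$, I substitute $z=x-y$. As $y$ runs over $G$ so does $z$, because translation is a bijection of $G$, and $y=x-z$. The homomorphism property then gives $\chi(x-z)=\chi(x)\chi(-z)$, so
\[
(T\chi)(x)=\sum_{z\in G}h(z)\chi(x-z)=\chi(x)\sum_{z\in G}h(z)\chi(-z)=\lambda_\chi\,\chi(x).
\]
This is the first formula for $\lambda_\chi$. For the second, note that $T_{0,y}=h(-y)$. Hence $\sum_y T_{0,y}\chi(y)=\sum_y h(-y)\chi(y)$, and the reindexing $z=-y$ turns this into $\sum_z h(z)\chi(-z)$.

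Finally, I will show that the characters form an orthogonal eigenbasis of $\mathbb C^G$. By the last part of Lemma~\ref{lem: character sum}, distinct characters are orthogonal, and each character is nonzero since it takes values on the unit circle. So the characters are linearly independent, and it remains to know that there are $|G|=\dim\mathbb C^G$ of them. This counting step is the only point that is not a one-line computation. In all applications in this paper, $G$ is a finite-dimensional $\Fq$-space, such as $V$ or $\Hom(N,W)$. For such $G$, Lemma~\ref{lem:perfect pairing}, applied with a perfect pairing such as the trace form or the standard dot product, exhibits exactly $|G|$ characters forming an orthogonal basis, which settles it. For a general finite abelian group, I would invoke the standard fact $|\widehat G|=|G|$, obtained from the cyclic decomposition of $G$.

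The result is an orthogonal basis of eigenvectors. This does not require $T$ to be symmetric, since $T$ is a convolution operator and therefore commutes with all translations.
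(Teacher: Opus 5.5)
Your proposal is correct. The paper does not prove this lemma; it quotes it from the cited reference. Your direct computation is the standard argument: reindex with $z=x-y$, use $\chi(x-z)=\chi(x)\chi(-z)$ and $T_{0,y}=h(-y)$, then combine orthogonality of distinct characters with $|\widehat G|=|G|$ to get a basis.
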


\noindent{\bf 2. The spectrum of incidence operator}
\vskip 2mm

For each $r$, let $\mathcal W_r$ be the vector space consisting of all real value functions on $\Gr{V}{r}$, equipped with the inner product
\[
\langle f,g\rangle=\sum_{R\in\Gr{V}{r}}f(R)g(R).
\]
Define the up and down operators by
\[
U_r:\mathcal W_r\to\mathcal W_{r+1},
\qquad
(U_rf)(X)=\sum_{\substack{R\subseteq X\\ \dim (R)=r}}f(R),
\]
and

\[
D_{r+1}:\mathcal W_{r+1}\to\mathcal W_r,
\qquad
(D_{r+1}g)(R)=\sum_{\substack{X\supseteq R\\ \dim (X)=r+1}}g(X).
\]
The background of up and down operators can be found in \cite{Stanley1991}. Define 
$$M_r=U_{r-1}D_r$$
as the incidence operator.  
The spectrum of $M_r$ is determined by Fulman \cite{Fulman2009}. 
\begin{lemma}(Fulman \cite{Fulman2009})\label{prop:grassmann-spectrum}
Assume $1\le k\le n/2$.  On $\mathcal W_k$, let $M_k=U_{k-1}D_k$. Then the eigenvalues of $M_k$ are
\[
    \lambda_i=q^i[k-i]_q[n-k-i+1]_q,
\qquad i=0,1,\ldots,k.
\]
The eigenspace for $\lambda_0$ is the one-subspace composed of constant functions, and the largest eigenvalue on the orthogonal complement of this 1-subspace is
\[
    \lambda_1=q[k-1]_q[n-k]_q.
\]
Moreover, $\lambda_0-\lambda_1=[n]_q$.
\end{lemma}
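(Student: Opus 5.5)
The plan is to realize $M_k=U_{k-1}D_k$ inside the standard $\mathfrak{sl}_2$-type structure of the up and down operators on the subspace lattice. The first step is a commutation relation on $\mathcal W_r$:
\[
D_{r+1}U_r-U_{r-1}D_r=\bigl([n-r]_q-[r]_q\bigr)I .
\]
To prove it, expand both sides at a fixed $R\in\Gr{V}{r}$.

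In $(D_{r+1}U_rf)(R)$, the diagonal term $R'=R$ appears once for each $(r+1)$-space $X\supseteq R$. That contributes $[n-r]_q f(R)$. In $(U_{r-1}D_rf)(R)$, the diagonal term appears once for each hyperplane of $R$. That contributes $[r]_q f(R)$.

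The off-diagonal terms in both expressions run over the $R'\in\Gr{V}{r}$ with $\dim(R\cap R')=r-1$. For such an $R'$, the space $X=R+R'$ is the unique $(r+1)$-space containing both, and $R\cap R'$ is the unique $(r-1)$-space contained in both. Hence each off-diagonal term appears exactly once on each side, and these terms cancel. Using \eqref{eq:qinteger-sum}, the constant simplifies to $[n-r]_q-[r]_q=q^r[n-2r]_q$.

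Next I use the harmonic decomposition. For $0\le i\le k$, let $\mathcal V_i=\ker D_i\subseteq\mathcal W_i$, with $\mathcal V_0$ the constants. The commutation relation gives, by induction on $j$, that for $h\in\mathcal V_i$
\[
D_{i+j}U^{j}h=c_j\,U^{j-1}h,\qquad c_j=\sum_{s=0}^{j-1}q^{i+s}[n-2i-2s]_q .
\]
Consequently $M_k(U^{k-i}h)=U\,D\,U^{k-i}h=c_{k-i}U^{k-i}h$. When $k\le n/2$, each $c_j$ with $j\le k-i$ is positive. Hence $U$ is injective on the relevant pieces, so $\dim U^{k-i}\mathcal V_i=\dim\mathcal V_i$. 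The standard dimension count $\dim \mathcal V_i=\qbinom{n}{i}-\qbinom{n}{i-1}$ then shows $\mathcal W_k=\bigoplus_{i=0}^k U^{k-i}\mathcal V_i$, and this sum is orthogonal because $U$ and $D$ are adjoint.

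It remains to identify $c_{k-i}$ with $q^i[k-i]_q[n-k-i+1]_q$. I would do this by induction on $j=k-i$, checking
\[
q^i[j]_q[n-i-j+1]_q+q^{i+j}[n-2i-2j]_q=q^i[j+1]_q[n-i-j]_q
\]
via repeated use of \eqref{eq:qinteger-sum}. This bookkeeping is the main computational obstacle, though it is routine.

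Finally, the values $\lambda_i=q^i[k-i]_q[n-k-i+1]_q$ are strictly decreasing in $i$. This follows because $c_j$ is a sum of positive terms that grows with $j$. So $\lambda_0$ has eigenspace exactly $U^k\mathcal V_0$, the constants. On the orthogonal complement of the constants, the largest eigenvalue is $\lambda_1=q[k-1]_q[n-k]_q$.

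For the last identity, write $[k]_q=1+q[k-1]_q$ and $[n-k+1]_q=[n-k]_q+q^{n-k}$. Expanding gives
\[
\lambda_0-\lambda_1=[n-k]_q+q^{n-k}+q^{n-k+1}[k-1]_q=[n-k]_q+q^{n-k}[k]_q=[n]_q,
\]
where the last two equalities use \eqref{eq:qinteger-sum}.
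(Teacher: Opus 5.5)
The paper gives no proof of this lemma; it quotes the spectrum from Fulman. Your argument is a correct, self-contained derivation by the standard up/down-operator route. It rests on three ingredients:
\begin{itemize}
\item the commutation relation $D_{r+1}U_r-U_{r-1}D_r=q^r[n-2r]_qI$, where your cancellation of the off-diagonal terms via the unique spaces $R+R'$ and $R\cap R'$ is right;
\item the harmonic pieces $U^{k-i}\ker D_i$;
\item positivity of the constants when $k\le n/2$.
\end{itemize}
What this buys over the citation is transparency. Reindexing $r=i+s$ in your constant gives
\[
\lambda_i=c_{k-i}=\sum_{r=i}^{k-1}q^r[n-2r]_q ,
\]
a tail sum of a fixed positive sequence. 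So $\lambda_i-\lambda_{i+1}=q^i[n-2i]_q>0$, and in particular $\lambda_0-\lambda_1=[n]_q$, follow immediately. This is also the precise form of your monotonicity remark. As phrased ("$c_j$ grows with $j$"), that remark glosses over the fact that $c_j$ depends on $i$ as well.

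You could also avoid quoting $\dim\mathcal V_i=\qbinom{n}{i}-\qbinom{n}{i-1}$. Positivity of the constants shows inductively that $D_iU_{i-1}$ is positive definite on $\mathcal W_{i-1}$. Hence $\mathcal W_i=\operatorname{im}U_{i-1}\oplus\ker D_i$ orthogonally, which gives the decomposition directly.

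One slip needs fixing. In the displayed inductive identity you substituted $n-k-i+1=n-i-j+1$, but with $k=i+j$ it equals $n-2i-j+1$. As written, the identity is false for $i\ge1$: at $q=1$, $n=6$, $i=j=1$ (so $k=3$) it reads $7=8$. The correct step, with $m=n-2i$, is
\[
[j]_q[m-j+1]_q+q^{j}[m-2j]_q=[j+1]_q[m-j]_q .
\]
This holds because
\[
[j+1]_q[m-j]_q-[j]_q[m-j+1]_q=q^j[m-j]_q-q^{m-j}[j]_q=q^j[m-2j]_q ,
\]
using \eqref{eq:qinteger-sum} twice. With that correction your proof is complete.
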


\section{Vector spaces over a finite field}
 
The goal of this section is to prove Theorem \ref{thm:vector-l2-ekr}. For this purpose, we establish a Bey-type inequality in vector space, and finally give the proof in details.  

\begin{lemma}\label{lem:vector-bey}
Let $\mathcal A\subseteq\Gr{V}{k}$ and $m=|\mathcal A|$.  If $1\le k\le n/2$, then
\[
\co(\mathcal A)\le q[k-1]_q[n-k]_qm+[n]_q\frac{m^2}{\qbinom{n}{k}}.
\]
\end{lemma}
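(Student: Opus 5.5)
The plan is to apply Lemma \ref{lem:general-spectral} with $\mathscr X=\Gr{V}{k}$ and $\mathscr Y=\Gr{V}{k-1}$. The spectral input will come from Lemma \ref{prop:grassmann-spectrum}.

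The first step is to identify $T=BB^T$ with the incidence operator $M_k=U_{k-1}D_k$ acting on $\mathcal W_k$. For $f\in\mathcal W_k$ and $X\in\Gr{V}{k}$ we have
\[
(Tf)(X)=\sum_{Y\in\Gr{V}{k-1}}B_{X,Y}\sum_{X'}B_{X',Y}f(X')=\sum_{\substack{Y\subseteq X\\ \dim(Y)=k-1}}\ \sum_{\substack{X'\supseteq Y\\ \dim(X')=k}}f(X').
\]
This is exactly $(U_{k-1}D_kf)(X)$. Hence $T=M_k$ as a real symmetric matrix indexed by $\Gr{V}{k}$.

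The second step is to check the two hypotheses of Lemma \ref{lem:general-spectral}, using $1\le k\le n/2$. By Lemma \ref{prop:grassmann-spectrum}, the constant functions form the eigenspace for $\lambda_0=[k]_q[n-k+1]_q$, so $T\mathbf 1=\lambda_0\mathbf 1$. One can also see this directly: $D_k\mathbf 1=[n-k+1]_q\mathbf 1$ and $U_{k-1}\mathbf 1=[k]_q\mathbf 1$. Since $T$ is symmetric, the orthogonal complement $\mathbf 1^\perp$ is $T$-invariant and spanned by the remaining eigenvectors. Their eigenvalues are $\lambda_i=q^i[k-i]_q[n-k-i+1]_q$ with $i\ge1$, and the largest of these is $\lambda_1=q[k-1]_q[n-k]_q$. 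The Rayleigh quotient of any nonzero $\bm z\perp\mathbf 1$ is therefore at most $\lambda_1$.

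The third step is to apply Lemma \ref{lem:general-spectral} with $v=\qbinom{n}{k}$ and $\lambda_0-\lambda_1=[n]_q$. The latter identity can be verified from \eqref{eq:qinteger-sum}: we have $[k]_q=1+q[k-1]_q$ and $[n-k+1]_q=[n-k]_q+q^{n-k}$. This gives
\[
\lambda_0-\lambda_1=[n-k]_q+q^{n-k}+q^{n-k+1}[k-1]_q=[n-k]_q+q^{n-k}[k]_q=[n]_q.
\]
Substituting into the lemma yields exactly the bound in Lemma \ref{lem:vector-bey}.

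There is no real obstacle, since the spectrum is quoted from Fulman. The only point needing care is that the eigenvalue of $\mathbf 1$ is genuinely the top one and is not repeated on $\mathbf 1^\perp$. This holds because $\lambda_i$ is strictly decreasing in $i$ when $k\le n/2$. That ensures $\lambda_1$ is the correct bound on the orthogonal complement.
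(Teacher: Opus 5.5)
Your proposal is correct and follows the paper's proof. Like the paper, you identify $BB^T$ with $M_k=U_{k-1}D_k$, take $\lambda_0=[k]_q[n-k+1]_q$ and $\lambda_1=q[k-1]_q[n-k]_q$ from Fulman's spectrum, and apply Lemma \ref{lem:general-spectral} with $v=\qbinom{n}{k}$ and $\lambda_0-\lambda_1=[n]_q$; your explicit checks of the operator identification, of this difference, and of the strict decrease of $\lambda_i$ simply fill in details that the paper asserts directly from Fulman's lemma.
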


\begin{proof}
Let $\mathscr{X}=\Gr{V}{k}$ and $\mathscr{Y}=\Gr{V}{k-1}$, and let $B$ be as defined before. Then $BB^T$ is precisely the incidence operator $M_k=U_{k-1}D_k$ on functions on $\Gr{V}{k}$: starting from a $k$-subspace, it sums over all $(k-1)$-subspaces the $k$-subspace contains, and then over all $k$-subspaces containing those.

By Lemma \ref{prop:grassmann-spectrum}, the all-one vector $\mathbf{1}$ corresponds to the eigenvalue $\lambda_0=[k]_q[n-k+1]_q$, while among the remaining eigenvalues, the largest one is $\lambda_1=q[k-1]_q[n-k]_q$. Moreover, $\lambda_0-\lambda_1=[n]_q$.  Applying Lemma \ref{lem:general-spectral} with $v=|\mathscr{X}|=\qbinom{n}{k}$, the inequality holds.
\end{proof}

\vskip 3mm
\begin{proof}[\bfseries{Proof of Theorem \ref{thm:vector-l2-ekr}}]
Let $m=|\mathcal F|$ and $M=\qbinom{n-1}{k-1}$. By Theorem \ref{thm:Hsieh}, we have $m\le M$.  Lemma \ref{lem:vector-bey} gives

\[
\co(\mathcal F)\le \mu_1(m),
\qquad
\mu_1(x)=q[k-1]_q[n-k]_qx+[n]_q\frac{x^2}{\qbinom{n}{k}}.
\]
The function $\mu_1$ is strictly increasing on $[0,\infty)$: if $0\le x_1<x_2$, then

\[
\mu_1(x_2)-\mu_1(x_1)=q[k-1]_q[n-k]_q(x_2-x_1)+[n]_q\frac{x_2^2-x_1^2}{\qbinom{n}{k}}>0.
\]
Therefore
\[
\co(\mathcal F)\le \mu_1(M)
=q[k-1]_q[n-k]_qM+[n]_q\frac{M^2}{\qbinom{n}{k}}.
\]
Using $\frac{M}{\qbinom{n}{k}}=\frac{\qbinom{n-1}{k-1}}{\qbinom{n}{k}}=\frac{[k]_q}{[n]_q}$, we get $[n]_q\frac{M^2}{\qbinom{n}{k}}=M[k]_q$. Thus
\[
\co(\mathcal F)
\le M\bigl(q[k-1]_q[n-k]_q+[k]_q\bigr).
\]
Since $[k]_q=[k-1]_q+q^{k-1}$ and $[n-k+1]_q=1+q[n-k]_q$, we have
\[
q[k-1]_q[n-k]_q+[k]_q
=[k-1]_q[n-k+1]_q+q^{k-1}.
\]
This proves the upper bound.

It remains to verify when the equality holds.  Fix a $1$-subspace $X$ of $V$ and consider
\[
\mathcal S_X=\{F\in\Gr{V}{k}:X\subseteq F\}.
\]
Let $E\in\Gr{V}{k-1}$.  If $X\subseteq E$, then every $k$-subspace containing $E$ also contains $X$, so $d_{\mathcal S_X}(E)=[n-k+1]_q$.
The number of such $E$ is $\qbinom{n-1}{k-2}$. If $X\not\subseteq E$, then $E\cap X=\{\bm{0}\}$, so $\dim(E+X)=k$.  Any $k$-subspace containing both $E$ and $X$ must contain $E+T$ and hence must equal $E+X$.  Thus, $d_{\mathcal S_X}(E)=1$. By the $q$-Pascal identity \eqref{eq:q-pascal}, the number of $(k-1)$-subspaces not containing $X$ is
\[
\qbinom{n}{k-1}-\qbinom{n-1}{k-2}=q^{k-1}\qbinom{n-1}{k-1}.
\]
Therefore
\[
\co(\mathcal S_X)
=\qbinom{n-1}{k-2}[n-k+1]_q^2
+q^{k-1}\qbinom{n-1}{k-1}.
\]
Using \eqref{eq:qbinom-ratio} with $N=n-1$ and $r=k-1$ gives

\[
\qbinom{n-1}{k-2}
=\qbinom{n-1}{k-1}\frac{[k-1]_q}{[n-k+1]_q}.
\]
Substitution yields

\[
\co(\mathcal S_X)
=\qbinom{n-1}{k-1}
\bigl([k-1]_q[n-k+1]_q+q^{k-1}\bigr),
\]
which is exactly the right-hand side of \eqref{eq:vector-l2-main}.

If equality holds in \eqref{eq:vector-l2-main} for some intersecting family $\mathcal F$, then the strict monotonicity of $\mu_1$ forces $m=M$.  Hsieh's equality statement in Theorem \ref{thm:Hsieh} then forces $\mathcal F=\mathcal S_X$ for some $1$-subspace $X$. Conversely, the computation above shows that every $\mathcal S_X$ attains the equality.
\end{proof}

\section{Finite affine spaces}

Before starting to prove Theorem \ref{thm:affine-l2-ekr}, we  need to establish Bey-type inequality for affine spaces. As a preparation, we give two lemmas.

The following lemma  counts the number of some $(k-1)$-flats and $k$-flats.

\begin{lemma}\label{lem:affine-counts}
Every $k$-flat contains exactly $q[k]_q$ $(k-1)$-flats, and every $(k-1)$-flat is contained in exactly $[n-k+1]_q$ $k$-flats.
\end{lemma}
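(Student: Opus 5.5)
The plan is a direct double count using the coset description $P=a+U$ of flats. Both assertions reduce to counting, inside the translation space $V$, the pairs consisting of a direction subspace and a coset of it. The only input needed is that $\qbinom{a}{b}$ counts $b$-subspaces of an $a$-dimensional space.

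\emph{First count.} Fix a $k$-flat $P=a+U$ with $\dim U=k$. Every $(k-1)$-flat $E=b+U'$ with $E\subseteq P$ has $U'\subseteq U$ and $b-a\in U$. Hence $E$ is a coset of $U'$ inside $P$, that is, $E=a+(c+U')$ with $c\in U$. So the $(k-1)$-flats in $P$ are in bijection with pairs $(U',\text{coset of }U'\text{ in }U)$, where $U'\in\Gr{U}{k-1}$.
\begin{itemize}
\item There are $\qbinom{k}{k-1}=[k]_q$ choices of $U'$.
\item For each $U'$ there are $|U/U'|=q$ cosets.
\end{itemize}
This gives $q[k]_q$ flats. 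The only point to check is that distinct pairs give distinct flats: a flat determines its direction $U'$, and distinct cosets of $U'$ are disjoint.

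\emph{Second count.} Fix a $(k-1)$-flat $E=b+U'$. A $k$-flat $P=a+U$ contains $E$ if and only if $U'\subseteq U$ and $b-a\in U$. The second condition says $P=b+U$, so $P$ is determined by $U$ alone. Therefore the $k$-flats containing $E$ correspond bijectively to the $k$-subspaces $U\supseteq U'$. These in turn correspond to the $1$-subspaces of $V/U'$, which has dimension $n-k+1$, so there are $\qbinom{n-k+1}{1}=[n-k+1]_q$ of them.

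As a consistency check, double counting incident pairs should give
\[
q^{n-k}\qbinom{n}{k}\,q[k]_q=q^{n-k+1}\qbinom{n}{k-1}[n-k+1]_q .
\]
This identity follows from \eqref{eq:qbinom-ratio}.

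There is no real obstacle here. The only care needed is in the second count: making precise that containment forces $P=b+U$, which uses the characterization of $P\subseteq Q$ stated in the paper.
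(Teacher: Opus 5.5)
Your proof is correct and follows essentially the same route as the paper. For the first count, you choose a $(k-1)$-subspace of $U$ ($[k]_q$ ways) and one of its $q$ cosets in $U$. For the second, you identify the $k$-flats through $b+U'$ with $b+U$ for $k$-subspaces $U\supseteq U'$, that is, with $1$-subspaces of $V/U'$. Your injectivity remarks and the double-counting consistency check are small additions the paper omits, but they do not change the argument.
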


\begin{proof}
Let $P=a+U$ be a $k$-flat with $\dim (U)=k$. To count the number of $(k-1)$-flats contained in $P$, we first choose its direction subspace, each being a $(k-1)$-subspace $H\subseteq U$, and there are $[k]_q$ such choices. For each fixed $H$, the $(k-1)$-flats contained in $P$ with direction $H$ are precisely the affine subspaces $a+u+H, u+H\in U/H$. Thus, their number is $|U/H|=q$.  Therefore, $P$ contains exactly $q[k]_q$ $(k-1)$-flats.

Let $E=b+H$ be a $(k-1)$-flat and $\dim (H)=k-1$.  Every $k$-flat containing $E$ can be denoted as $b+U$, where $U$ is a $k$-subspace satisfying $H\subseteq U\subseteq V$. Such subspaces $U$ are in bijection with one-subspaces $U/H$ of $V/H$. Since $\dim(V/H)=n-k+1$, the number of such subspaces is $[n-k+1]_q$, i.e., $E$ is contained in exactly $[n-k+1]_q$ $k$-flats.
\end{proof}

The canonical evaluation pairing
\[
V^* \times V \to \Fq, \quad (\eta, t) \longmapsto \eta(t),
\]
is perfect. Indeed, a nonzero linear functional does not vanish on all of $V$, while for every nonzero $t \in V$ there exists $\eta \in V^*$ such that $\eta(t) \neq 0$. Thus the pairing is nondegenerate in both variables, and $\dim(V)=\dim(V^*)$ implies that it is perfect. By Lemma \ref{lem:perfect pairing}, the characters of the additive group $V$ are therefore precisely $\chi_\eta(t) = \psi(\eta(t))$ for $\eta \in V^*$. We now apply Lemma \ref{lem:character decomposition} to the translation action of $V$ on $\mathcal{M}_k$.

\begin{lemma}\label{lem:affine-character-components}
Let the additive group $V$ act on $\mathcal M_k$ by $(a+U)\cdot t=(a+t)+U$. For $\eta\in V^*=\Hom(V,\Fq)$, let $\chi_\eta(t)=\psi(\eta(t))$ and let $\mathcal C_\eta$ be the corresponding character component and $K_\eta=\ker\eta$. Then $f\in\mathcal C_\eta$ if and only if there exists a function $g:\Gr{K_\eta}{k}\to\mathbb C$ such that

\begin{equation}\label{eq:f(a+U)}
    f(a+U)=
\begin{cases}
\psi(\eta(a))g(U),&U\subseteq K_\eta,\\
0,&U\nsubseteq K_\eta.
\end{cases}
\end{equation}
Moreover, $\Phi_\eta: \mathbb C^{\Gr{K_\eta}{k}}\to\mathcal C_\eta$
defined by the preceding formula \eqref{eq:f(a+U)} is a linear isomorphism with inverse $(\Phi^{-1}_\eta f)(U)=f(0+U)$. Moreover,
\[
\mathbb C^{\mathcal M_k}
=
\bigoplus_{\eta\in V^*}^{\perp}\mathcal C_\eta.
\]    
\end{lemma}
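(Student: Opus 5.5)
The plan is to unwind the definition of $\mathcal C_\eta$ directly. A function $f$ lies in $\mathcal C_\eta$ if and only if $f((a+t)+U)=\psi(\eta(t))f(a+U)$ for all $a,t\in V$ and all $k$-subspaces $U$. I will prove the two implications, then check that $\Phi_\eta$ is bijective, and finally deduce the orthogonal decomposition from Lemma \ref{lem:character decomposition}.

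\emph{Necessity.} Let $f\in\mathcal C_\eta$ and fix $U$. Taking $a=0$ and $t\in U$, the flat $t+U$ equals $0+U$, so
\[
f(0+U)=\psi(\eta(t))f(0+U)\qquad (t\in U).
\]
Suppose $U\not\subseteq K_\eta$. Then $\eta|_U$ is onto $\Fq$. Since $\psi$ is nontrivial, we can pick $t\in U$ with $\psi(\eta(t))\neq 1$, which forces $f(0+U)=0$. The translation rule then gives $f(a+U)=\psi(\eta(a))f(0+U)=0$ for every $a$. If instead $U\subseteq K_\eta$, put $g(U)=f(0+U)$; the translation rule gives $f(a+U)=\psi(\eta(a))g(U)$. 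Thus $f$ has the form \eqref{eq:f(a+U)}.

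\emph{Sufficiency and well-definedness.} Define $f$ by \eqref{eq:f(a+U)}. This is well defined: if $a+U=a'+U$ with $U\subseteq K_\eta$, then $a-a'\in U\subseteq K_\eta$, so $\psi(\eta(a))=\psi(\eta(a'))$. The identity $f((a+t)+U)=\psi(\eta(a+t))g(U)=\psi(\eta(t))f(a+U)$ is immediate, and both sides vanish when $U\not\subseteq K_\eta$. Hence $f\in\mathcal C_\eta$. The map $\Phi_\eta$ is clearly linear. Evaluating at $0+U$ recovers $g$, and the necessity argument shows that every element of $\mathcal C_\eta$ is $\Phi_\eta$ of its restriction $U\mapsto f(0+U)$. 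So $\Phi_\eta$ is an isomorphism with the stated inverse.

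\emph{Decomposition.} By Lemma \ref{lem:perfect pairing}, the characters of $V$ are exactly the $\chi_\eta$, $\eta\in V^*$. The direct sum $\mathbb C^{\mathcal M_k}=\bigoplus_\eta\mathcal C_\eta$ then follows from Lemma \ref{lem:character decomposition}. For orthogonality, take $f\in\mathcal C_\eta$ and $h\in\mathcal C_{\eta'}$ with $\eta\neq\eta'$. Each $\rho_t$ is unitary, so
\[
\langle f,h\rangle=\langle\rho_tf,\rho_th\rangle=\chi_\eta(t)\overline{\chi_{\eta'}(t)}\langle f,h\rangle .
\]
Choose $t$ with $\chi_\eta(t)\neq\chi_{\eta'}(t)$; such $t$ exists because the characters are distinct. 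Then $\chi_\eta(t)\overline{\chi_{\eta'}(t)}\neq1$, which forces $\langle f,h\rangle=0$.

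The only step needing care is the vanishing of $f$ when $U\not\subseteq K_\eta$. It relies on the nontriviality of $\psi$ together with the surjectivity of $\eta|_U$, which guarantee a translation inside $U$ with a nontrivial character value. Everything else is routine.
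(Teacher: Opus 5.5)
Your proof is correct and follows the paper's argument essentially step for step. You use invariance under translations $u\in U$ to force $f$ to vanish when $U\not\subseteq K_\eta$, use $f(a+U)=\psi(\eta(a))f(0+U)$ otherwise, check well-definedness for the converse, and appeal to the character decomposition lemma for the direct sum. Your explicit unitarity argument for the orthogonality of distinct components is a useful addition, since the paper asserts an orthogonal sum but cites only a lemma that guarantees directness.
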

\begin{proof}
Suppose first that $f \in \mathcal{C}_\eta$, and let $P = a + U$. For every $u\in U$, $P \cdot u = P$,
\[f(P) = f(P \cdot u) = \chi_\eta(u) f(P)=\psi(\eta(u))f(P).\] If $U\nsubseteq \ker \eta$, then $\eta|_U: U\to \Fq$ is nonzero and therefore surjective onto $\Fq$. Since $\psi$ is nontrivial, there exists $u\in U$ such that $\chi_\eta(u) \neq 1$, then forces $f(P)=0$.

Now suppose that $U\subseteq\ker\eta$. Define $g(U):=f(0+U)$. Since $a+U=(0+U)\cdot a$ we have $f(a + U) = \chi_\eta(a)f(0 + U) = \psi(\eta(a))g(U)$. This proves the necessity of \eqref{eq:f(a+U)}. Conversely, let $g: \Gr{K_\eta}{k}\to \mathbb{C}$ be arbitrary, and define $f=\Phi_\eta g$. This definition is independent of the representative $a$. Indeed, if $a + U = a' + U$ and $U \leq K_{\eta}$, then $a' - a \in U \subseteq K_{\eta}$, so $\eta(a') = \eta(a)$; if $U \subseteq K_{\eta}$, then for every $t \in V$, 
\[ f((a + U) \cdot t) = f((a + t) + U) = \psi(\eta(a + t))g(U)= \chi_{\eta}(t)f(a + U).\]
If $U \nsubseteq K_{\eta}$, both sides are zero. Thus, $\rho_t f = \chi_{\eta}(t)f$ for every $t \in V$, i.e., $f \in \mathcal C_{\eta}$. 

The map $\Phi_\eta$ is clearly linear. Define $\Psi_\eta : \mathcal C_\eta \to \mathbb C^{\Gr{K_\eta}{k}}$ by $(\Psi_\eta f)(U)= f(0 + U)$. Then, it is straightforward to verify that $\Phi_\eta \Psi_\eta f = f$ for $f \in \mathcal C_\eta$. Thus, $\Psi_\eta = \Phi_\eta^{-1}$ and $\Phi_\eta$ is a linear isomorphism. Finally, the decomposition follows from Lemma \ref{lem:character decomposition}. 
\end{proof}

We now establish the Bey-type inequality for affine space as below.

\begin{lemma}\label{lem:affine-bey}
Assume $n\ge2k+1$. Then for every $\mathcal A\subseteq\mathcal M_k$ with $m=|\mathcal A|$, the following inequality holds:
\[
    \co^{\mathrm{AG}}(\mathcal A)
\le q[k]_q[n-k]_qm+\frac{q[k]_q}{\qbinom{n}{k}}m^2.
\]
\end{lemma}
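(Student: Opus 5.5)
The plan is to apply Lemma \ref{lem:general-spectral} with $\mathscr X=\mathcal M_k$, $\mathscr Y=\mathcal M_{k-1}$ and $T=BB^T$. The parameters will be
\[
v=q^{n-k}\qbinom{n}{k},\qquad \lambda_0=q[k]_q[n-k+1]_q,\qquad \lambda_1=q[k]_q[n-k]_q.
\]
The value of $\lambda_0$ and the fact that $\mathbf 1$ is an eigenvector follow directly from Lemma \ref{lem:affine-counts}: each $k$-flat contains $q[k]_q$ $(k-1)$-flats, and each of these lies in $[n-k+1]_q$ $k$-flats. Then $\lambda_0-\lambda_1=q[k]_q\bigl([n-k+1]_q-[n-k]_q\bigr)=q[k]_q q^{n-k}$ by \eqref{eq:qinteger-sum}. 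Hence $(\lambda_0-\lambda_1)/v=q[k]_q/\qbinom{n}{k}$, which is exactly the claimed bound. So everything reduces to showing that every $\bm z\perp\mathbf 1$ has Rayleigh quotient at most $q[k]_q[n-k]_q$.

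\textbf{Reduction to character components.} $T$ commutes with the translation operators $\rho_t$, since translation preserves inclusion of flats. Therefore, by Lemma \ref{lem:character decomposition} and Lemma \ref{lem:affine-character-components}, $T$ preserves each $\mathcal C_\eta$, and these components are mutually orthogonal. Since $T$ is real symmetric, it suffices to bound the Rayleigh quotient (Hermitian version) of $T$ on each $\mathcal C_\eta$, after removing $\mathbf 1\in\mathcal C_0$. For $f=\Phi_\eta g$ we have $\|f\|^2=q^{n-k}\|g\|^2$, because each $U\subseteq K_\eta$ has $q^{n-k}$ cosets of equal modulus. Thus $\Phi_\eta$ rescales all norms by the same factor, and the spectrum of $T|_{\mathcal C_\eta}$ equals the spectrum of the transported operator $\Phi_\eta^{-1}T\Phi_\eta$.

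\textbf{Computing $T$ on $\mathcal C_\eta$; this is the main step.} Take $P=a+U$ with $U\subseteq K_\eta$. Then
\[
(Tf)(P)=\sum_{E\subseteq P}\ \sum_{P'\supseteq E}f(P').
\]
Write $E=b+H$ with $H$ a hyperplane of $U$ and $b\in P$, and $P'=b+U'$ with $U'\supseteq H$. Only the terms with $U'\subseteq K_\eta$ survive, and each contributes $\psi(\eta(b))g(U')=\psi(\eta(a))g(U')$, because $b-a\in U\subseteq K_\eta$. For each of the $[k]_q$ choices of $H$ there are $q$ choices of $E$, all giving the same inner sum. Hence
\[
(Tf)(a+U)=\psi(\eta(a))\,q\,(M_k^{K_\eta}g)(U),
\]
where $M_k^{K_\eta}$ is the incidence operator $U_{k-1}D_k$ on $\Gr{K_\eta}{k}$. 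In other words, $T$ acts on $\mathcal C_\eta$ as $q\,M_k^{K_\eta}$.

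\textbf{Applying Fulman's spectrum.} For $\eta\ne0$ we have $\dim K_\eta=n-1\ge 2k$, so Lemma \ref{prop:grassmann-spectrum} applies with ambient dimension $n-1$. Its largest eigenvalue is $[k]_q[n-k]_q$, so $T|_{\mathcal C_\eta}$ has top eigenvalue $q[k]_q[n-k]_q$. For $\eta=0$ we have $K_0=V$, and $\mathcal C_0$ consists of functions depending only on the direction. Here $\mathbf 1$ corresponds to the constant $g$, with eigenvalue $\lambda_0$. On $\mathbf 1^\perp\cap\mathcal C_0$ the largest eigenvalue is $q\cdot q[k-1]_q[n-k]_q\le q[k]_q[n-k]_q$, valid since $k\le n/2$. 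Combining the components gives the required $\lambda_1$, and Lemma \ref{lem:general-spectral} finishes the proof.

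The step I expect to need the most care is the identification $T|_{\mathcal C_\eta}=q\,M_k^{K_\eta}$. In particular, one has to check that the phase $\psi(\eta(b))$ is constant over $b\in P$, and that flats $P'$ whose direction is not contained in $K_\eta$ contribute $0$.
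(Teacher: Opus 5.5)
Your proposal is correct and follows the paper's proof essentially step for step. Both arguments decompose $\mathbb C^{\mathcal M_k}$ into the translation character components $\mathcal C_\eta$ and identify $T|_{\mathcal C_\eta}$ with $q$ times the Grassmann incidence operator on $\Gr{K_\eta}{k}$. Both then apply Fulman's spectrum in ambient dimension $n$ for $\eta=0$, which gives $q^2[k-1]_q[n-k]_q$ off the constants, and in ambient dimension $n-1$ for $\eta\neq0$, which gives $q[k]_q[n-k]_q$. Finally, both substitute $\lambda_0-\lambda_1=q^{n-k+1}[k]_q$ and $v=q^{n-k}\qbinom{n}{k}$ into Lemma \ref{lem:general-spectral}.
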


\begin{proof}
Take $\mathscr{X}=\mathcal M_k$ and $\mathscr{Y}=\mathcal M_{k-1}$, and let $B$ and $T=BB^T$ be as defined before. For any $P,Q\in \mathcal{M}_k$, we have
\[
T_{P,Q}=\sum\limits_{E\in \mathcal M_{k-1}}B_{P,E}B_{Q,E}=|\{E\in\mathcal M_{k-1}: E\subseteq P\cap Q\}|.
\]
Thus, $T_{P,Q}$ is the number of $(k-1)$-flats contained in both $P$ and $Q$. In particular, by Lemma \ref{lem:affine-counts}, $T_{P,P}=q[k]_q$. Moreover, for every $P\in\mathcal M_k$,

\[
  \sum_{Q \in \mathcal M_k} T_{P,Q} 
= \sum_{\substack{E \in \mathcal M_{k-1}, E \subseteq P}} 
  |\{Q \in \mathcal M_k : E \subseteq Q\}|
= q[k]_q[n - k + 1]_q.
\]
Thus, let $\mathbf{1}\in \mathbb{C}^{\mathcal{M}_k}$ denote the constant-one function, that is, $\mathbf{1}(P)=1$ for every $P\in\mathcal{M}_k$. Under the natural identification $f \mapsto (f(P))_{P \in \mathcal{M}_k}$, the function $\mathbf{1}$ is the all-one vector indexed by $\mathcal M_k$, then $T\mathbf 1=q[k]_q[n-k+1]_q\mathbf 1$, i.e., $\lambda_0=q[k]_q[n-k+1]_q$. 

Let $\mathbb{C}\mathbf{1} := \{c\mathbf{1} : c \in \mathbb{C}\}$ be the one-subspace of constant functions. With respect to the standard Hermitian inner product 
\[\langle f, g \rangle_{\mathcal{M}_k} := \sum_{P \in \mathcal{M}_k} f(P) \overline{g(P)},\]
its orthogonal complement is  
\[(\mathbb{C}\mathbf{1})^\perp := \{f \in \mathbb{C}^{\mathcal{M}_k} : \langle f, \mathbf{1} \rangle_{\mathcal{M}_k} = 0\}=\bigg\{ f \in \mathbb{C}^{\mathcal{M}_k} : \sum_{P \in \mathcal{M}_k} f(P) = 0 \bigg\}.\]
Finally, for every $f\in(\mathbb{C}\mathbf{1})^\perp$, the symmetry of $T$ gives
\[
    \langle Tf,\mathbf{1}\rangle_{\mathcal{M}_k}=\langle f,T\mathbf{1}\rangle_{\mathcal{M}_k}=\lambda_0\langle f,\mathbf{1}\rangle_{\mathcal{M}_k}=0.
\]
Thus, $(\mathbb{C}\mathbf{1})^\perp$ is $T$-invariant. It remains to determine $\lambda_1 := \lambda_{\text{max}} \left( T |_{(\mathbb{C}\mathbf{1})^\perp} \right)$

We regard $V$ as its additive group $(V,+)$, which acts on $\mathcal{M}_k$ by translation $(a+U)\cdot t=(a+t)+U$ for $a,t \in V$. Then the translation preserves containment. In particular, for every $P, Q \in\mathcal{M}_k$ and $t\in V$, the map $E \mapsto E + t$ is a bijection between the affine $(k-1)$-flats contained in $P \cap Q$ and those contained in $(P + t) \cap (Q + t)$. Thus 
\begin{equation}\label{eq: translation}
    T_{P+t, Q+t} = T_{P, Q}.
\end{equation} 
For $t\in V$, let $\rho_t: \mathbb C^{\mathcal M_k} \to \mathbb C^{\mathcal M_k}$ be defined by $(\rho_t f)(P) := f(P + t)$. Then \eqref{eq: translation} implies $T \rho_t = \rho_t T$ for $t\in V$. Let $K_0=V$. Lemma \ref{lem:character decomposition} therefore implies that every $\mathcal{C}_\eta$ is $T$-invariant. By Lemma \ref{lem:affine-character-components}, the function space admits the orthogonal decomposition 
\[
\mathbb C^{\mathcal M_k}
=
\bigoplus_{\eta\in V^*}^{\perp}\mathcal C_\eta.
\]
Fix $\eta \in V^*$, and let $K_\eta=\ker \eta$; in particular, $K_0 = V$. By Lemma \ref{lem:affine-character-components}, every  
$f \in \mathcal C_\eta$ has a unique representation $f = \Phi_\eta g$ for $g \in \mathbb C^{\Gr{K_\eta}{k}}$, where the linear isomorphism
$\Phi_\eta: \mathbb{C}^{\Gr{K_\eta}{k}}\to \mathcal{C}_\eta$ is defined by
\[
(\Phi_\eta g)(a + U) = 
\begin{cases} 
\psi(\eta(a))g(U), & U \subseteq K_\eta, \\
0, & U \nsubseteq K_\eta.
\end{cases}
\]

We now compute $T\Phi_\eta g$. Since $\mathcal C_{\eta}$ is $T$-invariant, $T\Phi_{\eta}g \in \mathcal C_{\eta}$. Therefore, we obtain that $(T\Phi_{\eta}g)(a + U)=0$ whenever $U \not\subseteq K_{\eta}$ by Lemma \ref{lem:affine-character-components}, It remains to compute $(T\Phi_{\eta}g)(a + U)$ for $U \subseteq K_{\eta}$. Let $P=a+U$ with $U\subseteq K_\eta$. A $(k-1)$-flat contained in $a + U$ has the form 
\[a + u + H, \quad H \subseteq U, \quad \dim(H)= k-1, \quad u + H \in U/H.\]
For a fixed $H$, the quotient $U/H$ has $q$ elements. The $k$-flats containing $a + u + H$ on which $\Phi_\eta g$ may be nonzero are precisely $a+u+L$, where $H\subseteq L\subseteq K_\eta$ and $\dim(L) = k$. Since $u \in U \subseteq K_\eta$, $\eta(a + u) = \eta(a)$. Therefore,
\[
\begin{aligned}
    (T\Phi_\eta g)(a + U)&=\sum_{\substack{H\subseteq U \\ \dim(H) = k-1}} \sum_{u+H\in U/H} \sum_{\substack{H\subseteq L\subseteq K_\eta \\ \dim(L) = k}} \psi(\eta(a+u)) g(L)\\
    &=q \psi(\eta(a)) \sum_{\substack{H\subseteq U \\ \dim(H) = k-1}} \sum_{\substack{H\subseteq L\subseteq K_\eta \\ \dim(L)= k}} g(L).
\end{aligned}
\]

Let $C_{K_\eta}$ be the incidence matrix between the $k$-subspaces and the $(k-1)$-subspaces of $K_\eta$. Then 
\[(C_{K_\eta} C_{K_\eta}^T g)(U) = \sum_{\substack{H\subseteq U \\ \dim (H)= k-1}} \sum_{\substack{H\subseteq L\subseteq K_\eta \\ \dim(L) = k}} g(L).\]
Therefore, $T\Phi_\eta=\Phi_\eta(q C_{K_\eta}C_{K_\eta}^T)$. Equivalently, $\Phi_\eta^{-1} \left( T |_{C_\eta} \right) \Phi_\eta = q C_{K_\eta} C_{K_\eta}^T$. Let $\mathbf{1}$ denote the all-one function on $\mathcal{M}_k$, and set $\lambda_1= \lambda_{\max} \left( T |_{(\mathbb C \mathbf{1})^\perp} \right)$. Since the decomposition is orthogonal and $\mathbf{1} \in \mathcal{C}_0$, every $C_\eta$ with $\eta \neq 0$ is contained in $(\mathbb{C}\mathbf{1})^\perp$. Thus  
\[(\mathbb{C}\mathbf{1})^\perp = (\mathcal{C}_0 \cap (\mathbb{C}\mathbf{1})^\perp) \oplus \bigoplus_{\eta \neq 0} C_\eta.\]
First consider $\mathcal C_0$. Since $K_0 = V$,  
$\Phi_0$ identifies $\mathcal C_0$ with the functions on  
$\Gr{V}{k}$. Moreover, if $\mathbf{1}'$ denotes the all-one function on $\Gr{V}{k}$, then $\langle \Phi_0 g, \mathbf{1} \rangle = q^{n-k} \langle g, \mathbf{1}' \rangle$ since every direction $U$ has $q^{n-k}$ affine translates. Thus, by Lemma \ref{prop:grassmann-spectrum}, we have 
\[\lambda_{\text{max}} \left( T |_{\mathcal{C}_0 \cap (\mathbb{C}\mathbf{1})^\perp} \right) = q \left( q [k-1]_q [n-k]_q \right) = q^2 [k-1]_q [n-k]_q.\]
If $\eta \neq 0$, then $\dim(K_\eta) = n-1$. By Lemma \ref{prop:grassmann-spectrum}, we have
\[\lambda_{\text{max}}(T|_{C_\eta}) = q [k]_q [(n-1) - k + 1]_q = q [k]_q [n - k]_q.\]
Therefore, 
\[\lambda_1 = \max \{ q^2[k - 1]_q[n - k]_q, q[k]_q[n - k]_q \}=q[k]_q[n-k]_q.\]
Finally, by \eqref{eq:qinteger-sum}, $\lambda_0-\lambda_1=q[k]_q\bigl([n-k+1]_q-[n-k]_q\bigr)=q[k]_qq^{n-k}$. Since $|\mathcal M_k|=q^{n-k}\qbinom{n}{k}$, Lemma \ref{lem:general-spectral} gives
\[
\operatorname{co}^{\mathrm{AG}}_2(\mathcal A)
\le q[k]_q[n-k]_qm
+q[k]_qq^{n-k}\frac{m^2}{q^{n-k}\qbinom{n}{k}}
=q[k]_q[n-k]_qm+\frac{q[k]_q}{\qbinom{n}{k}}m^2,\]
and the proof is complete.
\end{proof}

\vskip 3mm
\begin{proof}[\bfseries{Proof of Theorem \ref{thm:affine-l2-ekr}}]
Let $m=|\mathcal F|$. By Theorem \ref{thm:guo-xu}, we have $m\le M:=\qbinom{n}{k}$. Lemma \ref{lem:affine-bey} gives
\[
\operatorname{co}^{\mathrm{AG}}_2(\mathcal F)\le \mu_2(m),
\qquad
\mu_2(x)=q[k]_q[n-k]_qx+\frac{q[k]_q}{\qbinom{n}{k}}x^2.
\]
The function $\mu_2$ is strictly increasing on $[0,\infty)$, since both coefficients are nonnegative and the quadratic coefficient is positive.  Thus,
\[
\operatorname{co}^{\mathrm{AG}}_2(\mathcal F)
\le \mu_2(M)
=q[k]_q[n-k]_q\qbinom{n}{k}+q[k]_q\qbinom{n}{k},
\]
which is exactly \eqref{eq:affine-main-first}.

We now compute the value on a family consisting of all $k$-flats that contain a fixed point $p$.  Fix a $0$-flat $p$ and let $\mathcal S_p=\{P\in\mathcal M_k:p\in P\}$. Let $E\in\mathcal M_{k-1}$.  If $p\in E$, then every $k$-flat containing $E$ belongs to $\mathcal S_p$, so Lemma \ref{lem:affine-counts} gives $d_{\mathcal S_p}(E)=[n-k+1]_q$. The number of $(k-1)$-flats through $p$ is $\qbinom{n}{k-1}$, because such flats are determined by their directions.

If $p\notin E$, let $E=b+H$ with $\dim(H)=k-1$. Since $p-b\notin H$, $A=\aff(E\cup\{p\})=b+(H+\Span\{p-b\})$ is a $k$-flat containing $E$ and $p$. If $A_1=p+H_1$ is any $k$-flat containing both $E$ and $p$, then $A\subseteq A_1$ by the definition of the affine span. Since the directions of $A$ and $A_1$ have the same dimension, it follows that $A=A_1$. Thus $A$ is the unique $k$-flat containing $E$ and $p$, and therefore $d_{\mathcal S_p}(E)=1$. The total number of $(k-1)$-flats is $q^{n-k+1}\qbinom{n}{k-1}$, so the number not containing $p$ is $(q^{n-k+1}-1)\qbinom{n}{k-1}$. Thus
\begin{equation}\label{eq:affine-pencil-codegree}
\operatorname{co}^{\mathrm{AG}}_2(\mathcal S_p)
=\qbinom{n}{k-1}[n-k+1]_q^2
+(q^{n-k+1}-1)\qbinom{n}{k-1},
\end{equation}
which is the right-hand side of \eqref{eq:affine-main-second}.

It remains to see that \eqref{eq:affine-main-second} agrees with \eqref{eq:affine-main-first}. Applying \eqref{eq:qbinom-ratio} and the elementary equality $[k]_q=1+q[k-1]_q$, thus
\[
\begin{aligned}
\qbinom{n}{k-1}\bigl([n-k+1]_q^2+q^{n-k+1}-1\bigr)
&=\qbinom{n}{k-1}[n-k+1]_q\bigl([n-k+1]_q+q-1\bigr)\\
&=\qbinom{n}{k}[k]_q\,q\bigl([n-k]_q+1\bigr),
\end{aligned}
\]
which is \eqref{eq:affine-main-first}.  Thus $\mathcal S_p$ attains the upper bound.

If equality holds in \eqref{eq:affine-main-first}, then the strict monotonicity of $\mu_2$ forces $m=M=\qbinom{n}{k}$.  The equality statement in Theorem \ref{thm:guo-xu} then forces $\mathcal F=\mathcal S_p$ for some $0$-flat $p$.  Conversely, every $\mathcal S_p$ attains equality by \eqref{eq:affine-pencil-codegree}.
\end{proof}

\section{Attenuated spaces}
Similar as before, we first establish the Bey-type inequality for attenuated space, and then give the proof of Theorem \ref{thm:attenuated-l2-ekr}.


The following lemma is used to establish the Bey-type inequality.
\begin{lemma}\label{lem: trace pairing}
Let $N$ and $W$ be finite-dimensional vector spaces over $\mathbb{F}_q$. The bilinear pairing
\[
\beta : \Hom(W, N) \times \Hom(N, W) \to \mathbb{F}_q, \quad \beta(D, A) := \Tr(DA),
\]
is perfect. Here $\Tr$ denotes the trace of an endomorphism of $N$, which is independent of the choice of basis of $N$. Consequently, the functions
\[
\chi_D(A) := \psi(\Tr(DA)), \quad D \in \Hom(W, N), 
\]
are precisely the characters of the additive group $G = \Hom(N, W)$,
and they form an orthogonal basis of $\mathbb{C}^G$.    
\end{lemma}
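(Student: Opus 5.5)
We will first check that $\beta$ is bilinear, then show it is nondegenerate in both variables, and finally deduce the character statement from Lemma \ref{lem:perfect pairing}.

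\textbf{Bilinearity.} For fixed $D$, the map $A\mapsto DA$ is linear from $\Hom(N,W)$ to $\operatorname{End}(N)$. Similarly, for fixed $A$, the map $D\mapsto DA$ is linear from $\Hom(W,N)$ to $\operatorname{End}(N)$. Since $\Tr$ is linear on $\operatorname{End}(N)$, $\beta$ is bilinear.

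\textbf{Nondegeneracy via rank-one maps.} For $\phi\in N^*$ and $w\in W$, let $w\otimes\phi\in\Hom(N,W)$ denote the map $x\mapsto \phi(x)w$. Then $D(w\otimes\phi)$ is the endomorphism $x\mapsto\phi(x)Dw$ of $N$. A rank-one endomorphism $x\mapsto \phi(x)y$ has trace $\phi(y)$; this can be checked in a basis, or by extending $y$ to a basis when $y\ne 0$. Hence
\[
\beta(D,w\otimes\phi)=\phi(Dw).
\]

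Suppose $D\neq 0$. Pick $w$ with $Dw\neq 0$, and pick $\phi\in N^*$ with $\phi(Dw)\neq0$. Then $\beta(D,w\otimes\phi)\ne0$, so $\beta$ is nondegenerate in the first variable.

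For the second variable, use the symmetric construction. For $\omega\in W^*$ and $y\in N$, let $y\otimes\omega\in\Hom(W,N)$ denote the map $w\mapsto \omega(w)y$. Then $(y\otimes\omega)A$ is the endomorphism $x\mapsto\omega(Ax)y$, whose trace is $\omega(Ay)$. If $A\ne0$, choose $y$ with $Ay\neq0$ and $\omega$ with $\omega(Ay)\ne0$. This gives nondegeneracy in the second variable.

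\textbf{Perfectness.} Nondegeneracy in both variables makes the induced maps $\Hom(N,W)\to\Hom(W,N)^*$ and $\Hom(W,N)\to\Hom(N,W)^*$ injective. Both spaces have dimension $\dim(N)\dim(W)$, so these injective maps are isomorphisms. Hence $\beta$ is perfect. Alternatively, injectivity of both maps already gives the inequalities $\dim\le\dim$ in each direction, which forces equality.

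\textbf{Conclusion.} Applying Lemma \ref{lem:perfect pairing} with $V'=\Hom(W,N)$ and $V=\Hom(N,W)$ shows that the functions $\chi_D(A)=\psi(\Tr(DA))$ are exactly the characters of $G$. The same lemma shows they form an orthogonal basis of $\mathbb C^G$; orthogonality can also be read off from Lemma \ref{lem: character sum}.

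The only step needing care is the trace identity for rank-one endomorphisms. It is immediate in coordinates: the matrix of $x\mapsto\phi(x)y$ is the outer product of $y$ and $\phi$, whose diagonal sums to $\phi(y)$.
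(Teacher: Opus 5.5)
Your proof is correct and matches the paper's: the paper uses exactly your second-variable construction (for $A\neq 0$, pick $x$ with $A(x)\neq 0$ and $\alpha\in W^*$ with $\alpha(A(x))=1$, set $D=\alpha(\cdot)x$, so $\Tr(DA)=1$) together with the same rank-one trace identity. The only difference is that the paper checks nondegeneracy in that one variable and then uses $\dim\Hom(W,N)=\dim\Hom(N,W)$ to conclude perfectness, so your first-variable check and explicit bilinearity check are redundant but harmless.
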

\begin{proof}
Suppose that $A\in\Hom(N,W)$ is nonzero. Choose $x \in N$ such that $A(x) \neq 0$, and choose $\alpha \in W^*$ such that $\alpha(A(x)) = 1$. Define $D\in\Hom(W,N)$ by $D(w)=\alpha(w)x$. Then $DA(v) = \alpha(A(v))x$. The trace of the rank-one operator $v \mapsto \gamma(v)x$ is $\gamma(x)$; thus $\Tr(DA) = \alpha(A(x)) = 1$. Therefore the pairing is nondegenerate in the second variable. Since $\Hom(W, N)$ and $\Hom(N, W)$ have the same finite dimension $\dim(N)\dim(W)$, the pairing is perfect. The conclusion now follows from Lemma \ref{lem:perfect pairing}. 
\end{proof}

We now  derive the corresponding Bey-type inequality for attenuated space.

\begin{lemma}\label{lem:attenuated-bey}
For every $\mathcal A\subseteq\mathcal M_n$, with $m=|\mathcal A|$, one has
\[
\operatorname{co}^{\mathrm{AT}}_2(\mathcal A)
\le q^\ell[n-1]_qm+q^{n-1-\ell(n-1)}m^2.
\]
\end{lemma}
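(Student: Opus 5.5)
The plan is to apply Lemma~\ref{lem:general-spectral} with $\mathscr X=\mathcal M_n$ and $\mathscr Y=\mathcal M_{n-1}$, identified with $G=\Hom(N,W)$ and its hyperplane data. We will show that $T=BB^T$ is a convolution operator on $G$, so by Lemma~\ref{lem: convolution} and Lemma~\ref{lem: trace pairing} its eigenvectors are the characters $\chi_D$, $D\in\Hom(W,N)$. We then compute all eigenvalues explicitly.

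First I would justify the parametrization that was deferred to this lemma. Let $\pi_N,\pi_W$ be the projections for $V=N\oplus W$ and take $P\in\mathcal M_n$. Since $P\cap W=\{\bm 0\}=\ker\pi_N$, the map $\pi_N|_P$ is injective, hence an isomorphism $P\to N$ by dimension count. Then $A=\pi_W\circ(\pi_N|_P)^{-1}$ is the unique linear map with $P=P_A$. The same argument applied to $E\in\mathcal M_{n-1}$ gives the unique form $E_{H,a}$, with $H=\pi_N(E)$. Consequently
\[
T_{A,A'}=|\{H\ \text{hyperplane of }N: A|_H=A'|_H\}|=|\{H: H\subseteq\ker(A-A')\}|.
\]
This depends only on $C=A-A'$, namely $T_{A,A'}=h(C)$ with
\[
h(C)=\begin{cases}[n]_q,& C=0,\\ 1,&\rank C=1,\\ 0,&\rank C\ge 2.\end{cases}
\]
By Lemma~\ref{lem: convolution}, each $\chi_D$ is an eigenvector with eigenvalue $\lambda_D=\sum_{C}h(C)\psi(\Tr(DC))$. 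Since $h(-C)=h(C)$, the sign convention does not matter. The trivial character $\chi_0=\mathbf 1$ corresponds to $D=0$.

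The main computation is the sum over rank-one maps. Every such map is $C(v)=\gamma(v)w$ with $\gamma\in N^*\setminus\{0\}$ and $w\in W\setminus\{0\}$, and each $C$ arises from exactly $q-1$ pairs $(\gamma,w)$. For such $C$ we have $\Tr(DC)=\gamma(Dw)$. Fix $w$ and sum over $\gamma\neq0$. By Lemma~\ref{lem: character sum} (applied to $N^*$ via the evaluation pairing), the sum is $q^n-1$ if $Dw=0$ and $-1$ otherwise. Writing $K=|\ker D|$, this gives
\[
\lambda_D=[n]_q+\frac{(q^n-1)(K-1)-(q^\ell-K)}{q-1}=[n]_q+\frac{q^nK-q^n-q^\ell+1}{q-1}.
\]
For $D=0$ we have $K=q^\ell$, so $\lambda_0=[n]_q+\frac{(q^n-1)(q^\ell-1)}{q-1}=q^\ell[n]_q$. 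This agrees with the direct count $q[n]_q\cdot$ nothing else needed, since it equals $[n]_q$ hyperplanes times $q^\ell$ extensions.

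For $D\neq0$ we have $K\le q^{\ell-1}$, and $\lambda_D$ is increasing in $K$. Hence
\[
\lambda_1=[n]_q+\frac{q^{n+\ell-1}-q^n-q^\ell+1}{q-1}=q^\ell[n-1]_q,
\]
which one checks by expanding $q^\ell[n-1]_q-[n]_q$. Since the characters are orthogonal, every $\bm z\perp\mathbf 1$ lies in the span of the nontrivial characters, so its Rayleigh quotient is at most $\lambda_1$.

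Finally, $\lambda_0-\lambda_1=q^\ell\bigl([n]_q-[n-1]_q\bigr)=q^{\ell+n-1}$ by \eqref{eq:qinteger-sum}, and $v=q^{\ell n}$. Lemma~\ref{lem:general-spectral} then gives the coefficient $q^{\ell+n-1}/q^{\ell n}=q^{n-1-\ell(n-1)}$ on $m^2$, which is the stated bound.

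The only delicate step is the rank-one character sum. There one must correctly account for the $(q-1)$-fold redundancy of the pairs $(\gamma,w)$, and use that the eigenvalue is maximized exactly when $\ker D$ is as large as possible among $D\neq0$. Since $\dim\ker D\le\ell-1$ whenever $D\neq0$, this maximum is attained at $K=q^{\ell-1}$, and no assumption on $\ell$ versus $n$ is needed.
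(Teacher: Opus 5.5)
Your proposal is correct and follows the paper's route. Like the paper, you identify $\mathcal M_n$ with $G=\Hom(N,W)$, show $T$ is a convolution operator diagonalized by the trace-pairing characters $\chi_D$, and feed $\lambda_0=q^\ell[n]_q$, $\lambda_1=q^\ell[n-1]_q$, $v=q^{\ell n}$ into Lemma~\ref{lem:general-spectral}. The only difference is bookkeeping in the eigenvalue: you compute $h(C)$ by rank and sum over rank-one maps $v\mapsto\gamma(v)w$ grouped by $w$, getting a formula in $|\ker D|$, whereas the paper groups by hyperplanes $H$ and uses the subgroup sum over $L_H=\{B:B|_H=0\}$ to get $q^\ell[n-\rank D]_q$ directly. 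The two agree since $|\ker D|=q^{\ell-\rank D}$.
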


\begin{proof}
Let $\mathscr{X}=\mathcal M_n$ and $\mathscr{Y}=\mathcal M_{n-1}$, and let $B$ and $T=BB^T$ be as defined before. As stated in Section 1, we identify $\mathcal{M}_n$ as $G=\Hom(N,W)$ by $A\leftrightarrow P_A$. Since the justification for this equality was deferred, we provide a complete derivation below. In fact, let $V=N\oplus W$ with $\dim(N)=n$. For any $P \in \mathcal{M}_n$, we have $P \cap W = \{\bm{0}\}$. Consider the linear projection

\[
\pi : V \to N, \quad \pi(x + y) = x \quad (x \in N, \, y \in W).
\]
Its restriction $\pi|_P : P \to N$ is injective, since $p \in \ker(\pi|_P)$ implies $p \in P \cap W = \{\bm{0}\}$. As $\dim(P)=\dim(N)=n$, this restriction is an isomorphism. Thus for every $x \in N$, there is a unique $p_x \in P$ with $\pi(p_x)=x$, and we may write uniquely $p_x = x + A(x)$ for some $A(x) \in W$. The map $x \mapsto A(x)$ is linear by construction, so $A \in \Hom(N,W)$. Conversely, every $A \in \Hom(N,W)$ defines an $n$-subspace $P_A=\{x + A(x): x \in N\}$, which clearly satisfies $P_A \cap W =\{\bm{0}\}$. Thus the correspondence $A \leftrightarrow P_A$ is a bijection between $\Hom(N,W)$ and $\mathcal M_n$. 

For $P_A,P_B\in\mathcal{M}_n$, the entry of $T$ indexed by $P_A$ and $P_B$, denoted by $T_{A,B}$, is given by 
\[
   T_{A,B}=|\{E\in\mathcal{M}_{n-1}:\ E\subseteq P_A\cap P_B\}|. 
\]
Moreover, $P_A\cap P_B=\{x + A(x): x \in \ker(A-B)\}$, thus $P_A\cap P_B$ is naturally isomorphic to $\ker(A-B)$, and $\dim(P_A\cap P_B)=n-\rank(A-B)$, where $\rank(A-B)$ is the dimension of the image space of $A-B$.

For $C\in G$, define $\Phi_C:\ V\to V$, $\Phi_C(x+w)=x+w+C(x)$ for $x\in N, w\in W$. Then $\Phi_C$ is a linear automorphism with inverse $\Phi_{-C}$, and $\Phi_C(W)=W$, $\Phi_C(P_A)=P_{A+C}$. Moreover, if $E_{H,a}=\{x+a(x): x\in H\}\in\mathcal{M}_{n-1}$, then $\Phi_C(E_{H,a})=E_{H,a+C|_H}$. Set $E+C:=\Phi_C(E)$. Thus, $\Phi_C$ is a bijection from $\mathcal{M}_{n-1}$ to itself and preserves containment. 
Therefore, the mapping $E\mapsto E+C$ is a bijection between the $(k-1)$-subspaces contained in $P_A\cap P_B$ and those contained in $P_{A+C}\cap P_{B+C}$. Therefore, $T_{A+C,B+C}=T_{A,B}$. Taking $C=-B$, we get $T_{A,B}=T_{A-B,0}$, so $T_{A,B}$ depends only on $A-B$. Thus there exists a function $h: G \to \mathbb{Z}_{\geq0}$ such that $T_{A,B} = h(A-B)$. Therefore, for $f\in\mathbb{C}^G$
\[
(Tf)(A) = \sum_{B \in G} T_{A,B} f(B) = \sum_{B \in G} h(A-B) f(B),
\]
which is exactly convolution on $G$. By Lemma \ref{lem: trace pairing}, the functions $\chi_D(A)=\psi(\Tr(DA))$ with $D\in\Hom(W,N)$ are precisely the characters of $G$ and form an orthogonal basis of complex-valued functions on $G$. By Lemma \ref{lem: convolution}, each $\chi_D$ is an eigenvector, with eigenvalue $\lambda(D)=\sum_{B\in G}T_{0,B}\chi_D(B)$. We now compute $\lambda(D)$. Let $0$ denote the zero map, so $P_0=N$. Every member of $\mathcal{M}_{n-1}$ has the form $E_{H,a}=\{x+a(x):\ x\in H\}$ where $H\subseteq N$ is a hyperplane and $a: H\to N$ is linear. A common $(n-1)$-subspace of $P_0$ and $P_B$ is exactly a hyperplane $H\subseteq N$ such that $B|_H=0$, which implies that 
\[
    T_{0,B}=|\{H\subseteq N:\ \dim(H)=n-1, B|_H=0\}|.
\]
Therefore the eigenvalue of $\chi_D$ is
\[
\lambda(D)=\sum_{B\in\mathcal{M}_n}T_{0,B}\chi_D(B)
=\sum_{\substack{H\subseteq N\\ \dim (H)=n-1}}
  \sum_{\substack{B\in\Hom(N,W)\\ B|_H=0}}\chi_D(B).
\]
Fix such a hyperplane $H$, let $L_H=\{B\in\Hom(N,W):B|_H=0\}$. Since $B\in L_H$ factors through the one-dimensional quotient $N/H$, $L_H\cong \Hom(N/H,W)$, which implies $\dim(L_H)=\ell$ and $|L_H|=q^\ell$. By Lemma \ref{lem: character sum}, 

\[
\sum_{\substack{B\in L_H}}\chi_D(B)=
\begin{cases} 
q^\ell, & \chi_D(B)|_{L_H} \text{ is trivial,} \\
0, & \chi_D(B)|_{L_H} \text{ is nontrivial.}
\end{cases}
\]

We determine when $\chi_D(B)|_{L_H}$ is trivial. Choose a nonzero functional $\alpha:N\to\Fq$ with $\ker\alpha=H$.  Every $B_w\in L_H$ is of the form $B_w(x)=\alpha(x)w$ for a unique $w\in W$. Then $DB_w:N\to N$ is the rank-at-most-one  operator $x\longmapsto \alpha(x)D(w)$. The trace of the rank-at-most-one operator $x\mapsto\alpha(x)v$ is $\alpha(v)$; applying this with $v=D(w)$ gives $\Tr(DB_w)=\alpha(D(w))$. Thus $\chi_D$ is trivial on $L_H$ if and only if $\alpha(D(w))=0$ for every $w\in W$, which is equivalent to $D(W)\subseteq H$. If $\rank(D)=j$, then $D(W)$ is a fixed $j$-dimensional subspace of $N$, and the number of hyperplanes of $N$ containing it is $[n-j]_q$.  Therefore, the eigenvalue depends only on $j$ and is
\[
\lambda_j=q^\ell[n-j]_q, \qquad j=0,1,\ldots,n.
\]
For $D=0$, the character $\chi_D$ is the all-one function, and $\lambda_0=q^\ell[n]_q$. If $D\neq0$, then $\rank(D)\geq1$. Since $[n-j]_q$ decreases with $j$, the largest eigenvalue on the
orthogonal complement of the all-one function occurs at $j=1$ and equals $\lambda_1=q^\ell[n-1]_q$. 
Moreover, $\lambda_0-\lambda_1=q^\ell([n]_q-[n-1]_q)=q^{\ell+n-1}$. Since $|X|=|G|=q^{\ell n}$, Lemma \ref{lem:general-spectral} gives
\[
\operatorname{co}^{\mathrm{AT}}_2(\mathcal A)
\le q^\ell[n-1]_qm+q^{\ell+n-1}\frac{m^2}{q^{\ell n}}
=q^\ell[n-1]_qm+q^{n-1-\ell(n-1)}m^2, 
\]
and the proof is complete.
\end{proof}

\vskip 3mm
\begin{proof}[\bfseries{Proof of Theorem \ref{thm:attenuated-l2-ekr}}]
Let $m=|\mathcal F|$.  By Theorem \ref{thm:Huang}, we have $m\le M:=q^{\ell(n-1)}$. Lemma \ref{lem:attenuated-bey} gives
\[
\operatorname{co}^{\mathrm{AT}}_2(\mathcal F)\le \mu_3(m),
\qquad
\mu_3(x)=q^\ell[n-1]_qx+q^{n-1-\ell(n-1)}x^2.
\]
The function $\mu_3$ is strictly increasing on $[0,\infty)$.  Hence
\[
\operatorname{co}^{\mathrm{AT}}_2(\mathcal F)
\le \mu_3(M)
=q^\ell[n-1]_qq^{\ell(n-1)}
+q^{n-1-\ell(n-1)}q^{2\ell(n-1)},
\]
which simplifies to \eqref{eq:attenuated-main}.

We next compute the value on the subfamily of $\mathcal M_n$ consisting of all members that contain a fixed $1$-subspace $L\subseteq V$ satisfying $L\cap W=\{\bm{0}\}$. Fix a $1$-subspace $L\subseteq V$ with $L\cap W=\{\bm{0}\}$ and let $\mathcal S_L=\{P_A:A|_L=0\}$. Let $E_{H,a}$ be a $(n-1)$-subspace, where $H\subseteq N$ is a hyperplane and $a:H\to W$ is linear.

If $L\subseteq H$ and $a|_L=0$, then every extension $A:N\to W$ of $a$ automatically satisfies $A|_L=0$.  There are $q^\ell$ such extensions, so $d_{\mathcal S_L}(E_{H,a})=q^\ell$. There are $[n-1]_q$ hyperplanes $H$ containing $L$, and for each such $H$ the number of maps $a:H\to W$ with $a|_L=0$ is $q^{\ell\dim(H/L)}=q^{\ell(n-2)}$. 
Their total contribution to the squared codegree sum is $[n-1]_q q^{\ell(n-2)}(q^\ell)^2=[n-1]_q q^{\ell n}$.

If $H$ does not contain $L$, then $N=H\oplus L$.  For every map $a:H\to W$, there is a unique extension $A:N\to W$ such that $A|_H=a$ and $A|_L=0$.  Therefore, $d_{\mathcal S_L}(E_{H,a})=1$. 
The number of hyperplanes of $N$ not containing $L$ is $[n]_q-[n-1]_q=q^{n-1}$ and for each such $H$, there are $q^{\ell(n-1)}$ choices for $a$. This case contributes $q^{n-1}q^{\ell(n-1)}$.

If $L\subseteq H$ but $a|_L\ne0$, then no extension $A$ with $A|_L=0$ can contain $E_{H,a}$, so the contribution is zero. Combining the cases,
\[
\operatorname{co}^{\mathrm{AT}}_2(\mathcal S_L)
=[n-1]_qq^{\ell n}+q^{n-1}q^{\ell(n-1)}
=q^{\ell(n-1)}\bigl(q^\ell[n-1]_q+q^{n-1}\bigr).
\]
Thus $\mathcal S_L$ attains the bound for a fixed $1$-subspace $L$ with $L\cap W=\{\bm{0}\}$.

Now assume $\ell=n$ and consider a dual family
\[
\mathcal D_U=\{P\in\mathcal M_n:P\subseteq U\},
\quad
\dim (U)=2n-1,
\quad
S=U\cap W,
\quad
\dim (S)=n-1.
\]
If $E\not\subseteq U$, then $d_{\mathcal D_U}(E)=0$. Suppose $E\subseteq U$.  Since $E\in\mathcal M_{n-1}$, it is disjoint from $W$ and hence from $S\subseteq W$.  A member $P\in\mathcal D_U$ containing $E$ is obtained by choosing a one-dimensional subspace of $U/E$ and adding it to $E$.  However, the chosen line must not lie in $(E+S)/E$, because then the resulting $P$ would meet $S\subseteq W$ nontrivially and hence would not belong to $\mathcal M_n$.  Since $\dim(U/E)=n$ and $\dim((E+S)/E)=n-1$, the number of allowed lines is $[n]_q-[n-1]_q=q^{n-1}$. Thus $d_{\mathcal D_U}(E)=q^{n-1}$ for each admissible $E\subseteq U$.

It remains to count such $E$.  Choose a complement $T$ of $S$ in $U$, so $\dim (T)=n$.  An $(n-1)$-subspace $E\subseteq U$ with $E\cap S=\{\bm{0}\}$ is determined by first choosing an $(n-1)$-subspace $H\subseteq T$ and then choosing a linear map $a:H\to S$ such that $E_{H,a}=\{x+a(x):\ x\in H\}$.  Hence the number of such $E$ is $\qbinom{n}{n-1}q^{(n-1)^2}=[n]_q q^{(n-1)^2}$.
Therefore
\[
\operatorname{co}^{\mathrm{AT}}_2(\mathcal D_U)
=[n]_q q^{(n-1)^2}(q^{n-1})^2
=[n]_q q^{(n-1)(n+1)}.
\]
For $\ell=n$, the upper bound \eqref{eq:attenuated-main} equals
\[
q^{n(n-1)}\bigl(q^n[n-1]_q+q^{n-1}\bigr)
=q^{n(n-1)}q^{n-1}\bigl(q[n-1]_q+1\bigr)
=[n]_q q^{(n-1)(n+1)},
\]
because $[n]_q=q[n-1]_q+1$.  Hence dual families also attain the bound when $\ell=n$.

Finally, if equality holds in \eqref{eq:attenuated-main}, then the strict monotonicity of $\mu_3$ forces $m=M=q^{\ell(n-1)}$.  The equality statement in  Theorem \ref{thm:Huang}, identifies $\mathcal F$ as one of the listed extremal families.  Conversely, the computations above show that all listed extremal families attain equality.
\end{proof}

\section*{Acknowledgement}
\noindent This research is supported by National Key R\&D Program of China under grant number 2024YFA1013900 and NSFC under grant number 12471327.

\end{document}